\DeclareMathOperator{\image}{''}
\DeclareMathOperator{\Ult}{Ult}
\DeclareMathOperator{\cf}{cf}
\DeclareMathOperator{\crit}{crit}
\DeclareMathOperator{\Col}{Col}
\DeclareMathOperator{\otp}{otp}
\newcommand{\ZFC}{\mathrm{ZFC}}
\newcommand{\Ord}{\mathrm{Ord}}
\newcommand{\GCH}{\mathrm{GCH}}
\newcommand{\Inacc}{\mathrm{Inacc}}
\newtheorem{theorem}{Theorem}
\newtheorem{lemma}[theorem]{Lemma}
\newtheorem{claim}[theorem]{Claim}
\newtheorem{remark}[theorem]{Remark}
\newtheorem{definition}[theorem]{Definition}
\newtheorem{conjecture}[theorem]{Conjecture}
\newtheorem{question}[theorem]{Question}
\newtheorem{corollary}[theorem]{Corollary}
\author{Eilon Bilinsky}
\address{Institute of Mathematics,
 The Hebrew University of Jerusalem,
 Jerusalem 91904, Israel}
\email{eilonbil@mail.tau.ac.il}
\author{Yair Hayut}
\address{Einstein Institute of Mathematics,
 The Hebrew University of Jerusalem,
 Jerusalem 91904, Israel}
\email{yair.hayut@mail.huji.ac.il}
\thanks{The second author was supported by the Israel Science Foundation, grant number 1967/21}
\begin{document}
\title{The Spectra of transitive models}
\maketitle
\begin{abstract}
In this paper we study the spectrum of heights of transitive models of theories extending $V = L[A]$, under various definitions. In particular, we investigate the consistency strength of making those spectra as simple as possible.
\end{abstract}
\section{Introduction}
From a Platonist perspective, some ordinals are more definable then others. For example, $\omega_1^L$ is the maximal ordinal such that there is a transitive model $M$ satisfying $V=L$ and every ordinal is countable. Indeed, this model, $M = L_{\omega_1^L}$ is rather absolute and satisfies various definitions of maximality, with respect to the theory above. This observation remains true in the generic multiverse. 

In this paper we are trying to formalize and analyse this property. In particular, we ask what is the class of ordinals from which there is a theory $T$ that \emph{pins} them in the sense that they are the maximal height of a transitive model of $T$, perhaps in an absolute way (see the definitions in Section \ref{section:definitions}).

This problem is related to a wider model theoretical program. Recall that Morley's Theorem states that for any countable theory $T$ and a type $p$, there are models of $T$ that omit $p$ of arbitrarily large cardinality, if and only if the possible cardinalities of such models are unbounded below $\beth_{\omega_1}$, \cite{Morley}. Shelah has generalized this statement and showed that there is a connection between the cardinalities of models that satisfy a theory $T$ and omits a type $p$ and the supremum of \emph{definable orders} which can be guaranteed to be well founded in $\omega$-models of a related theory\footnote{an $\omega$-model is a model of an extension of the language of set theory for which $\omega$ is standard}. 
This correspondence extends to uncountable languages, allowing non-trivial generalizations of Morley's theorem, see \cite{BarwiseKunen}. 

In some sense, the main motivation of this paper, is dual to Shelah's observation. Given an ordinal, can we pin it down as the maximal height of transitive models of certain theory? This question can be formulated in a many non-equivalent ways, and in this paper we studied a couple of related formulations, and their connections.  

Let us mention that the problem of characterizing the possible spectrum of $\omega$-models is related to a conjecture  of Shelah: Let $T$ be a theory such that $T$ has $\omega$-model of cardinality $\aleph_{\omega_1}$. Then $T$ has a model of cardinality continuum. One of the motivations of this work is to study certain aspects of the variant of this problem in which we replace $\omega$-correctness by transitivity. 

\section{Definitions}\label{section:definitions}
We are interested in trying to identify ordinals which can be pinned down as \emph{the} height of transitive models of a certain theory. We would like those notions to be relatively absolute, and we will investigate several notions of uniqueness for the transitive models. 

All our theories $T$ are over the language $\{=, \in, A\}$ where $A$ is a unary relation. Moreover, {\bf we always assume that $T$ includes the axiom $V = L[A]$}. 

The inclusion of the axiom $V = L[A]$ is intended to ensure a (strong) version of the axiom of choice. In particular, we want to rule out "wide" models, i.e.\ models of cardinality much larger than their height, as in \cite{Friedman75}. Moreover, this allows one to avoid the need of coding the structure of $L$ on the ordinals of the model. 

While this assumption is at some points unnatural, we keep it in order to make the definitions ahead compatible with the intuition that the von-Neumann ordinals capture the order type of the well orders of transitive models, without assuming that our models satisfy any strong version of replacement. Nevertheless, in our models, Mostowski collapsing lemma might still fail. 

Many of the results of this paper can be proven with the same argument without this restriction, but e.g.\ Theorem \ref{thm:S-subset-of-Sp-max} and Lemma \ref{lemma:omega_1-not-in-Sp*} rely on it in an essential way. 
\begin{definition}
Let $S$ be the class of all ordinals $\alpha$ such that there is a theory $T$ such that $\alpha$ is the maximal ordinal such that there is a model $M \models T$, with $M \cap \Ord = \alpha$. 

Let $S_u$ be class of all $\alpha$ such that there is a transitive model $M$, $M \cap \Ord = \alpha$, and there is no $N \neq M$ transitive, $N \cap \Ord \geq M \cap \Ord$, $N \equiv M$. 
\end{definition}

\begin{definition}\label{definition:S*}
Let $S^*$ be the collection of all $\alpha$ such that there is a theory $T$, and a transitive model $M$ of $T$ such that $M \cap \Ord = \alpha$ and in any generic extension there is no $N \models T$, transitive, with $N \cap \Ord > \alpha$. 

Let $S^*_u$ be the collection of all $\alpha$ such that there is a transitive model $M$ such that $M \cap \Ord = \alpha$ and in any generic extension there is no $N \neq M$ transitive, $N \cap \Ord \geq M \cap \Ord$, $N \equiv M$.
\end{definition}
Clearly, $S^* \subseteq S$, $S_u^* \subseteq S_u \subseteq S$.
\begin{remark}
Every countable ordinal is in $S^*_u$, as we can code its order-type into the theory. In particular $\omega_1 \subseteq S^*$.
\end{remark}

\begin{definition}
Let $T$ be a theory. 

Let $\mathfrak{Sp}_T$ be the class of all ordinals $\alpha$ such that there is a transitive model $M$ of $T$, $M \cap \Ord = \alpha$ and $M$ is maximal in the sense that there is no transitive $N$, and a non-trivial elementary embedding $j \colon M\to N$.\footnote{The only trivial elementary embedding is $id \colon M \to M$, so if $N \neq M$, the embedding is automatically non-trivial.}

Let $\mathfrak{Sp}_{T,u}$ be the class of all ordinals $\alpha$ such that there is a unique transitive model $M$ of $T$, $M \cap \Ord = \alpha$ and $M$ is maximal in the sense that there is no transitive $N$, and non-trivial elementary embedding $j \colon M\to N$.

Let $\mathfrak{Sp}^*_T$ be the class of all ordinals $\alpha$ such that there is a transitive model $M$ of $T$, $M \cap \Ord = \alpha$ and in any generic extension, $M$ is maximal.

Let $\mathfrak{Sp}^*_{T,u}$ be the class of all ordinals $\alpha$ such that there is a unique transitive model $M$ of $T$, $M \cap \Ord = \alpha$ and in any generic extension, $M$ is unique and maximal.

Let $\mathfrak{Sp} = \bigcup_T \mathfrak{Sp}_T$, $\mathfrak{Sp}^* = \bigcup_T \mathfrak{Sp}^*_T$ and so on. 
\end{definition}
While $S$ is at most of cardinality $2^{\aleph_0}$, $\mathfrak{Sp}$ can be a proper class. Indeed, by Lemmas \ref{lemma:reaching-to-measurable} and \ref{lemma:S-and-Sp-below-measurable}, $\mathfrak{Sp}$ is a set if and only if there is a measurable cardinal.

\begin{claim}
For every $\alpha < \omega_1$, there is $T$ such that $\mathfrak{Sp}_{T,u}^* = \{\alpha\}$. In particular, $\omega_1 \subseteq \mathfrak{Sp}^*_u \subseteq \mathfrak{Sp}^*$.
\end{claim}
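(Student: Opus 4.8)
The plan is to pin $\alpha$ by a theory that literally describes a fixed real coding $\alpha$, so that the theory has, even after forcing, exactly one transitive model, which is moreover pointwise definable and hence rigid. I will describe the case where $\alpha$ is an infinite limit ordinal; the successor case, $\alpha = \omega$, and finite ordinals are handled by routine variants (naming the largest ordinal in the successor case). Fix a real $\prec \subseteq \omega$ coding a well-ordering of $\omega$ of order type $\alpha$, and let $M = L_\alpha[\prec]$, where $\prec$ is construed as the interpretation of the predicate $A$. Let $T$ consist of $V = L[A]$, the axioms ``$A \subseteq \omega$ and $A$ well-orders $\omega$'', the full atomic diagram of $(\omega,\prec)$ written with the definable numerals $\underline n$, and the axiom ``every $n \in \omega$ has an $A$-rank $\rho(n)$ (the order type of its $A$-predecessors), and the $\rho(n)$ are cofinal in $\Ord$''. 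Equivalently one may take $T = \mathrm{Th}(M)$; all of the above are among its consequences, so $M \models T$. The only nontrivial point in checking $M \models T$ is the last axiom: for each $\xi < \alpha$ the transitive collapse of the $\prec$-predecessors of the natural number of $\prec$-rank $\xi$ is the ordinal $\xi$, and since $\alpha$ is a limit this collapse has rank below $\alpha$, hence lies in $M$; thus $M$ genuinely computes these ranks and they are cofinal in $\alpha = M \cap \Ord$.

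Next I would establish uniqueness and the containment $\mathfrak{Sp}^*_{T,u} \subseteq \{\alpha\}$. Let $N$ be any transitive model of $T$, in $V$ or in any generic extension. Transitivity gives $\omega^N = \omega$ and makes each numeral $\underline n$ absolute, so the atomic-diagram axioms force $A^N = \prec$. Since $N$ is transitive, transitive collapse is absolute for $N$, so the $N$-rank it assigns to each $n$ is the genuine $\prec$-rank, which ranges over all ordinals below $\alpha$. If $\Ord^N > \alpha$, no $n$ could witness cofinality past $\alpha$; if $\Ord^N < \alpha$, some $n$ of genuine rank $\Ord^N$ would have no $A$-rank in $N$; either contradicts the rank axiom. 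Hence $\Ord^N = \alpha$, and then $V = L[A]$ yields $N = L_\alpha[\prec] = M$. Thus $M$ is the unique transitive model of $T$, of any height, in every generic extension; in particular no $\beta \neq \alpha$ admits a transitive model of $T$, so $\mathfrak{Sp}^*_{T,u} \subseteq \{\alpha\}$.

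Finally, maximality in all generic extensions. The key observation is that $M$ is pointwise definable: every $\xi < \alpha$ is named by ``the $\otp$ of the $A$-predecessors of $\underline{n_\xi}$'', where $n_\xi$ is the numeral of $\prec$-rank $\xi$, and then $V = L[A]$ makes every set definable from ordinals and $A$, hence definable outright. Consequently, if $j \colon M \to N$ is elementary with $N$ transitive (in any generic extension), then $N \equiv M$, so $N \models T$, so by the previous paragraph $N = M$; and since every ordinal of $M$ is defined by a formula, $j$ must fix every ordinal, whence, using $V = L[A]$ and $j(A) = A$, the map $j$ is the identity. Thus $M$ is maximal and remains maximal after any forcing, giving $\alpha \in \mathfrak{Sp}^*_{T,u}$ and therefore $\mathfrak{Sp}^*_{T,u} = \{\alpha\}$. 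The ``in particular'' clause follows, since each $\alpha < \omega_1$ now lies in $\mathfrak{Sp}^*_u \subseteq \mathfrak{Sp}^*$.

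The step I expect to be most delicate is the verification that $M$ genuinely realizes the rank axioms for every countable $\alpha$, i.e.\ that the relevant transitive collapses are elements of $M$ and that every ordinal below $\alpha$ is thereby definable. This is transparent for limit $\alpha$ but must be revisited for successor and non-admissible $\alpha$, where $M$ satisfies very little of $\ZF$; here the standing restriction to theories extending $V = L[A]$ is exactly what keeps the models thin enough to control, and the coding may need minor adjustment so that all ordinals below $\alpha$ remain definable and the uniqueness argument still forces $\Ord^N = \alpha$.
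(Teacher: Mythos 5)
Your proof is correct and is exactly the approach the paper intends: the Claim is stated there without proof, as an instance of the preceding Remark's idea of coding the order type of $\alpha$ into the theory (compare also the coding of a real into $T$ in Lemma \ref{lemma:omega1-inaccessible-to-reals}), and your construction --- fixing a real $\prec$ coding $\alpha$, putting its atomic diagram and the rank-cofinality axiom into $T$, and deriving uniqueness of $L_\alpha[\prec]$ across all generic extensions, pointwise definability, and hence rigidity --- supplies precisely the details the paper omits. The one caveat is the point you flag yourself: the successor case is not merely a ``routine variant,'' since under the natural reading of $V=L[A]$ a transitive model cannot have successor height $\beta+1$ (its top ordinal $\beta$ would have to lie in some $L_\gamma[A]$ with $\gamma \leq \beta$, which is impossible), so for such $\alpha$ the difficulty is inherited from the paper's own unproved statement rather than being a defect of your method for the limit case.
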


\begin{lemma}\label{lemma:S-and-Sp-below-measurable} Let $\kappa$ be measurable. $\mathfrak{Sp} \cup S \subseteq \kappa$. 
\end{lemma}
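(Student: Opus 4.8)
The plan is to fix a normal measure $U$ on $\kappa$ with ultrapower embedding $j \colon V \to N = \Ult(V,U)$, $\crit(j) = \kappa$, and to show that no ordinal $\alpha \geq \kappa$ can belong to $\mathfrak{Sp}$ or to $S$. Throughout I will use that $N$ is transitive with $V_{\kappa+1} \subseteq N$, that satisfaction for set-sized structures is absolute between $V$ and the transitive class $N$, and that every theory $T$ in our countable language is coded by an element of $V_{\omega+1}$, so $j(T) = T$. The two parts are proved by producing, respectively, a non-trivial elementary embedding out of a model of height $\alpha$ (for $\mathfrak{Sp}$), and a model of height strictly above $\alpha$ (for $S$).

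For the $\mathfrak{Sp}$ part, suppose $\alpha \in \mathfrak{Sp}_T$ is witnessed by a transitive $M \models T$ with $M \cap \Ord = \alpha \geq \kappa$, so that $M$ admits no non-trivial elementary embedding into a transitive structure. If $\alpha > \kappa$ then $\kappa \in M$, and I would simply restrict $j$: the map $j \restriction M \colon M \to j(M)$ is elementary by absoluteness of satisfaction, $j(M)$ is transitive as a set of the transitive class $N$, and it is non-trivial because $j(\kappa) > \kappa$ while $\kappa \in M$. This contradicts maximality.

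The genuinely delicate case is $\alpha = \kappa$, and I expect this to be the main obstacle. Here $M \subseteq V_\kappa$, so any embedding with critical point $\kappa$ — in particular $j \restriction M$ — is the identity on $M$ and gives no information; one must descend below $\kappa$. I would do this using sharps: since $M \models V = L[A]$ is transitive of height $\kappa$, we have $M = L_\kappa[A^M]$ with $A^M \in V_\kappa$, and measurability of $\kappa$ guarantees that $(A^M)^{\#}$ exists. This yields a closed unbounded class $I$ of Silver indiscernibles for $(L[A^M], \in, A^M)$; as $\kappa$ is a cardinal above $|\mathrm{trcl}(A^M)|$ it lies in $I$ and $I \cap \kappa$ is cofinal in $\kappa$, so $M = L_\kappa[A^M]$ is the Skolem hull of $I \cap \kappa$. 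Shifting $I \cap \kappa$ by any order-preserving map that moves at least one indiscernible then extends, by indiscernibility, to a non-trivial elementary embedding of $M$ into some transitive $L_\beta[A^M]$, with critical point the least moved indiscernible, which is below $\kappa$. This again contradicts maximality, so $\kappa \notin \mathfrak{Sp}$ and hence $\mathfrak{Sp} \subseteq \kappa$.

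For the $S$ part, suppose $\alpha \geq \kappa$ is the maximal height of a model of a theory $T$; since $T$ is countable, $j(T) = T$. The statement that $T$ has a model $M$ with $M \cap \Ord = \alpha$ is absolute between $V$ and transitive classes and true in $V$, so applying an embedding that pushes $\kappa$ past $\alpha$ produces a $T$-model of height strictly above $\alpha$, contradicting maximality. A single $j$ suffices unless $\alpha$ happens to be a fixed point of $j$; to handle that I would iterate the measure, taking $j_{0,i} \colon V \to N_i$ with critical sequence $\langle \kappa_i \rangle$ and choosing $i$ with $\kappa_i = j_{0,i}(\kappa) > \alpha$, so that $j_{0,i}(\alpha) \geq \kappa_i > \alpha$. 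By elementarity $N_i$ contains a model of $T$ of height $j_{0,i}(\alpha)$, and since $N_i$ is transitive this is a genuine $T$-model in $V$ of height above $\alpha$ — the desired contradiction. Thus $S \subseteq \kappa$ as well, completing the proof that $\mathfrak{Sp} \cup S \subseteq \kappa$.
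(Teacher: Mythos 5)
Your $S$ part is correct and is exactly the paper's argument (iterate the measure until the critical sequence passes $\alpha$, apply elementarity, use transitivity of the iterate), and your $\mathfrak{Sp}$ argument for $\alpha > \kappa$ is also fine. The genuine gap is in the case $\alpha = \kappa$. First, your claim that $M = L_\kappa[A^M]$ with $A^M \in V_\kappa$ is false in general: $A^M$ is the interpretation of the unary predicate, i.e.\ an arbitrary subset of $M$, and it typically has rank exactly $\kappa$ (for many theories $T$, e.g.\ ones asserting that $A$ codes a surjection from $\omega$ onto every ordinal, it is \emph{provably} unbounded in the model). For such $A^M$ the standard fact ``$\kappa$ measurable $\Rightarrow x^\#$ exists'' only applies to $x \in V_\kappa$, and --- more fatally --- even granting that $(A^M)^\#$ exists, all of its Silver indiscernibles lie above $|\mathrm{trcl}(A^M)| = \kappa$, so $I \cap \kappa = \emptyset$: there is nothing to shift below $\kappa$, and the hull-of-indiscernibles picture of $M$ collapses. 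Indeed the conclusion you aim for, a non-trivial elementary embedding $L_\kappa[A^M] \to L_\beta[A^M]$ with the \emph{same} predicate and $\beta > \kappa$, is outright impossible whenever $T$ proves that $A$ is unbounded in the ordinals, since elementarity would force $A^M$ to be unbounded in $\beta$ while it is contained in $L_\kappa$. Any correct argument must stretch the predicate along with the model, which is precisely what applying $j$ to $M$ does.

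The deeper issue is that the case split comes from misreading ``non-trivial.'' By the paper's footnote, the only trivial elementary embedding is $id \colon M \to M$; an elementary embedding into a transitive $N \neq M$ is non-trivial by definition, even if it fixes $M$ pointwise. So in your own setup the map $j \restriction M = \mathrm{id}_M \colon M \to j(M)$, with $j(M)$ transitive of height $j(\kappa) > \kappa$, already witnesses that $M$ is not maximal: the ``genuinely delicate case'' does not exist. This is the paper's uniform proof: take an iteration $j$ with $\crit j = \kappa$ and $j(\kappa) > M \cap \Ord$; then $j \image M \prec j(M)$, and $j(M)$ is a transitive model of the same theory of height $j(M \cap \Ord) \geq j(\kappa) > M \cap \Ord$, which disposes of $S$ and $\mathfrak{Sp}$ simultaneously. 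The misreading is not harmless: under your stricter reading (non-trivial $=$ has a critical point), the lemma is actually \emph{false}. For any uncountable cardinal $\kappa$, measurable or not, let $A \subseteq \kappa$ code surjections from $\omega$ onto every $\alpha < \kappa$; an induction on ordinals shows that any elementary embedding of $L_\kappa[A]$ into a transitive set fixes every ordinal (each $\alpha$ is the range of an $A$-coded surjection from $\omega$, which the embedding fixes pointwise), hence has no critical point, so $\kappa$ would belong to this strict version of $\mathfrak{Sp}$.
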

\begin{proof}
Let $M$ be a transitive model of $T$, $M\cap \Ord \geq \kappa$. Let $j \colon V \to N$ be an elementary embedding with critical point $\kappa$, $j(\kappa) > M \cap \Ord$ and $N$ transitive. Such an embedding exists, by taking a sufficiently long iteration of a measure on $\kappa$ (note that $N$ is typically not closed under countable sequences). 

Then $j\image M \prec j(M)$ and $j(M) \cap \Ord = j(M \cap \Ord) \geq j(\kappa) > M \cap \Ord$. 
\end{proof}
By applying the argument locally, we obtain the following lemma.
\begin{lemma}
Let $\kappa$ be a weakly compact cardinal. Then $[\kappa, \kappa^{+}) \cap (\mathfrak{Sp} \cup S) = \emptyset$.
\end{lemma}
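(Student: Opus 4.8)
The plan is to localize the proof of Lemma \ref{lemma:S-and-Sp-below-measurable}. Fix $\alpha\in[\kappa,\kappa^+)$ and suppose toward a contradiction that $\alpha\in\mathfrak{Sp}\cup S$, witnessed by a transitive model $M$ of a theory $T$ with $M\cap\Ord=\alpha$ (as in Lemma \ref{lemma:S-and-Sp-below-measurable}). Since $\kappa\le\alpha<\kappa^+$ we have $|\alpha|=\kappa$, and as $M\models V=L[A]$ we may take $M=L_\alpha[A]$ with $A\subseteq\Ord$, so $|M|=\kappa$ and $M\in H_{\kappa^+}$. First I would fix $b\subseteq\kappa$ coding $M$ (and, for the $S$ part below, a bijection $\kappa\to\alpha$), and choose a $\kappa$-model $\mathcal P$ — transitive, $|\mathcal P|=\kappa$, ${}^{<\kappa}\mathcal P\subseteq\mathcal P$, $\mathcal P\models\ZFC^-$ — with $M,A,\alpha,b\in\mathcal P$, for instance $\mathcal P=L_\gamma[b]$ for a suitable $\gamma<\kappa^+$. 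By the embedding characterization of weak compactness there is a transitive $\mathcal Q$ and an elementary $j\colon\mathcal P\to\mathcal Q$ with $\crit(j)=\kappa$, which we may take to be the ultrapower of $\mathcal P$ by a normal $\mathcal P$-ultrafilter on $\kappa$.

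For the $\mathfrak{Sp}$ part this already suffices. Since $M\in\mathcal P$, the restriction $j\restriction M\colon M\to j(M)$ is elementary (for $a_1,\dots,a_n\in M$ one has $M\models\varphi(\bar a)$ iff $\mathcal P\models$``$M\models\varphi(\bar a)$'' iff $\mathcal Q\models$``$j(M)\models\varphi(j(\bar a))$'' iff $j(M)\models\varphi(j\image\bar a)$), and $j(M)$ is transitive as an element of the transitive $\mathcal Q$. If $\alpha>\kappa$ then $\kappa\in M$ and $j(\kappa)>\kappa$, so $j\restriction M$ is not the identity; if $\alpha=\kappa$ then $M\subseteq V_\kappa$, so $j\restriction M=\mathrm{id}_M$, but now $j(M)\cap\Ord=j(\kappa)>\kappa$, so $j(M)\ne M$. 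In either case we get a non-trivial elementary embedding of $M$ into a transitive set, contradicting the maximality of $M$; hence $\alpha\notin\mathfrak{Sp}$.

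For the $S$ part I would instead use $j(M)$ as a new model of $T$. Since $|T|<\kappa$ we have $j(T)=T$, so by elementarity $j(M)\models T$, and $j(M)\cap\Ord=j(\alpha)$; thus it suffices to produce a model of $T$ taller than $\alpha$, i.e.\ to show $j(\alpha)>\alpha$. When $\alpha=\kappa$ this is immediate, as $j(\alpha)=j(\kappa)>\kappa=\alpha$. When $\alpha>\kappa$, since $\alpha\ge\crit(j)$ we have $j(\alpha)\ge j(\kappa)$, so it is enough to arrange $j(\kappa)>\alpha$; then $j(M)$ is a model of $T$ of height $j(\alpha)>\alpha$, contradicting the maximality defining $S$.

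The hard part will be exactly this inequality $j(\kappa)>\alpha$ in the case $\alpha>\kappa$: it is the local replacement for the phrase ``a sufficiently long iteration of a measure on $\kappa$'' in Lemma \ref{lemma:S-and-Sp-below-measurable}. Here no measure on $V$ is available, and a single weakly compact embedding may have $j(\kappa)$ small — it need only exceed $(\kappa^+)^{\mathcal Q}$, which can lie below $\alpha$ — and indeed when $\cf(\alpha)\ne\kappa$ the embedding can be continuous at $\alpha$ with $j(\alpha)=\alpha$. I expect the fix to come from a careful choice of $\mathcal P$ and of the $\mathcal P$-ultrafilter: arranging $\alpha<(\kappa^+)^{\mathcal P}$ and that the normal $\mathcal P$-ultrafilter concentrates high enough that $[\mathrm{id}]$ sits above $\alpha$ in $\mathcal Q$; failing that, one iterates the weakly compact embedding through ${<}\kappa^+$ steps (the iterates remaining well-founded because $\mathcal P$ is closed under ${<}\kappa$-sequences) until the image of $\kappa$ passes $\alpha$. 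The place where $\alpha<\kappa^+$ enters essentially is precisely that $\alpha$ is then coded by a subset of $\kappa$, hence visible to $\mathcal P$ and to the embedding, which is what makes the localization possible.
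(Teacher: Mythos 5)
Your skeleton is the right localization of Lemma \ref{lemma:S-and-Sp-below-measurable}, and your $\mathfrak{Sp}$ half is essentially complete (one small repair: $L_\gamma[b]$ need not be closed under ${<}\kappa$-sequences when $V \neq L[b]$, so take $\mathcal{P}$ to be the transitive collapse of a ${<}\kappa$-closed $X \prec H(\kappa^+)$ containing the transitive closure of $\{M,b,\alpha\}$, or note that closure is not actually needed below). But the $S$ half has a genuine gap, which you yourself flag, and neither of your proposed repairs can close it. First, a \emph{normal} $\mathcal{P}$-ultrafilter always has $[\mathrm{id}]_U = \kappa$: if $[g] < [\mathrm{id}]$ then $g$ is regressive on a set in $U$, hence by normality constant on a set in $U$, so the predecessors of $[\mathrm{id}]$ are exactly the ordinals below $\kappa$. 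Thus ``$[\mathrm{id}]$ sits above $\alpha$'' contradicts normality outright when $\alpha > \kappa$. Second, weakly compact embeddings cannot in general be iterated: $U$ measures only the sets in $\mathcal{P}$, while $\mathcal{Q} = \Ult(\mathcal{P},U)$ contains subsets of $\kappa$ not in $\mathcal{P}$, so $\Ult(\mathcal{Q},U)$ is undefined (and $j(U)$ makes no sense since $U \notin \mathcal{P}$). The property needed to iterate even once --- weak amenability of the ultrafilter --- characterizes Gitman's weakly Ramsey ($1$-iterable) cardinals and is strictly stronger than weak compactness, so ``iterate through ${<}\kappa^+$ steps'' is not available at this level.

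The standard repair builds $\alpha$ into the compactness argument rather than into the choice of ultrafilter. Fix a well-order $W \subseteq \kappa\times\kappa$ of order type $\alpha+1$ and consider the $\mathcal{L}_{\kappa,\kappa}$-theory consisting of: the elementary diagram of $\mathcal{P}$ (constants $c_x$ for $x \in \mathcal{P}$); the sentences $\forall y\,\bigl(y \in c_\xi \leftrightarrow \bigvee_{\eta<\xi} y = c_\eta\bigr)$ for each $\xi<\kappa$; the $\mathcal{L}_{\omega_1,\omega_1}$ sentence asserting that $\in$ is well-founded; and new constants $d_\xi$ ($\xi<\kappa$) with the axioms ``$d_\xi$ is an ordinal and $d_\xi \in c_\kappa$'' and ``$d_\xi \in d_\eta$'' whenever $\xi \mathrel{W} \eta$. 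Any subtheory of size ${<}\kappa$ mentions ${<}\kappa$ many $d_\xi$, whose $W$-order type is ${<}\kappa$, so it is realized inside $\mathcal{P}$ itself; by weak compactness the whole theory has a model, which is genuinely well-founded, and collapsing it yields $j \colon \mathcal{P} \to \mathcal{Q}$ with $\mathcal{Q}$ transitive, $\crit(j)=\kappa$, and $j(\kappa) > \alpha$, since the collapses of the $d_\xi$ form a set of ordinals below $j(\kappa)$ of order type $\alpha+1$. Then $j(M)$ is a transitive model of $T$ (as $j(T)=T$) of height $j(\alpha) \geq j(\kappa) > \alpha$, killing the $S$ part, while $j\restriction M$ kills the $\mathfrak{Sp}$ part exactly as you argued. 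This is the precise content of the paper's phrase ``applying the argument locally'': the ``sufficiently long iteration'' of the measurable case is traded for the order type realized by the constants $d_\xi$ in the compactness argument, and your observation that $\alpha < \kappa^+$ makes $\alpha$ codable by a subset of $\kappa$ is exactly what makes this trade possible.
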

\section{The properties of $S$ and $\mathfrak{Sp}$}
The study of the possible spectrum of models of $\mathcal{L}_{\omega_1,\omega}$-sentences plays an important role in model theory. For example, in the recent paper \cite{BaldwinShelah2022}, a complete $\mathcal{L}_{\omega_1,\omega}$-sentence is constructed with maximal models of arbitrary large cardinality  below a measurable cardinal. In \cite{SinapovaSouldatos2020}, an $\mathcal{L}_{\omega_1,\omega}$-sentence is constructed with spectrum that contains exactly the cardinality of branches of $\omega_1$-Kurepa trees and the continuum.

As customary, models of a $\mathcal{L}_{\omega_1,\omega}$-sentence can be viewed as models of a certain first order theory with the correct $\omega$.  

Thus, one can view our definition of $\mathfrak{Sp}_T$ as asking about the spectrum of models of $T$ that computes ordinals correctly. For those models, the height of their ordinals is a finer measurement of their size than their cardinality.\footnote{We include the assertion that $T$ contains the sentence $V = L[A]$, in order to make sure that the von-Neumann ordinals are the correct way to measure the height of the model.} 

\begin{lemma}
$\omega_1 \in S$. Moreover, for every $\alpha$, if $\alpha \in S$ then $\aleph_{\alpha}$ and $\beth_\alpha$ are in $S$.
\end{lemma}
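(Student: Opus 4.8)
\emph{Overview.} The plan is to handle all three assertions by a single template: to pin a larger height I force a definable (or $A$-coded), absolutely computed sequence of ordinals to have order type controlled by the pinning of $\alpha$, while using a fragment of absoluteness to cap the height from above. For $\omega_1\in S$ I would take $T$ to be ``$V=L[A]$'' together with the assertion that every set is countable. The cap rests on the fact that countability is absolute for transitive models: if $N$ is transitive and $N\models$``$\gamma$ is countable'', the witnessing surjection $\omega\to\gamma$ lies in $V$, so $\gamma$ is genuinely countable; hence every transitive model of $T$ has height $\le\omega_1$. To realize $\omega_1$ as a height, fix $A\subseteq\omega_1$ coding, for each $\gamma<\omega_1$, a well-ordering of $\omega$ of order type $\gamma$; then $\omega_1^{L[A]}=\omega_1$ and $M=L_{\omega_1}[A]=(H_{\omega_1})^{L[A]}$ is a transitive model of $T$ with $M\cap\Ord=\omega_1$. (If $\alpha$ is an $\aleph$- or $\beth$-fixed point the relevant assertion below is trivial, so assume $\alpha<\aleph_\alpha$, resp.\ $\alpha<\beth_\alpha$.)

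For $\aleph_\alpha$, fix a theory $T_\alpha$ witnessing $\alpha\in S$ and a transitive model $M_\alpha\models T_\alpha$ of height $\alpha$. I would let $T'$ assert ``$V=L[A]$'', a weak fragment of set theory, and that $A$ codes, on the class of cardinals, a structure which (after transitive collapse) is a model of $T_\alpha$ whose ordinals are exactly the cardinals. The cap is the crux: in a transitive $N\models T'$ that collapse is a genuine transitive model of $T_\alpha$, hence has height $\le\alpha$ since $\alpha$ is the maximal height of transitive models of $T_\alpha$; as every genuine cardinal below $N\cap\Ord$ remains a cardinal in $N$ (``being a cardinal'' is downward absolute to transitive models), $N\cap\Ord>\aleph_\alpha$ would force the cardinals of $N$ to have order type $\ge\alpha+1$, a contradiction, so $N\cap\Ord\le\aleph_\alpha$. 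For the witness I would take $A\subseteq\aleph_\alpha$ coding a surjection that collapses each non-cardinal $\gamma<\aleph_\alpha$ — so that $M=L_{\aleph_\alpha}[A]$ has no spurious cardinals and its cardinals have order type exactly $\alpha$ — together with a copy of $M_\alpha$ (of size $|\alpha|<\aleph_\alpha$) positioned on the cardinals; then $M\models T'$ and $M\cap\Ord=\aleph_\alpha$.

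The assertion $\beth_\alpha\in S$ follows the same template, with ``being a cardinal'' replaced by a notion of size, and here lies what I expect to be the main obstacle: neither cardinality nor the power-set operation is absolute for transitive models, so the internal beth function of a model need not agree with the true one, and one cannot simply speak of a model's beth numbers. The remedy is to phrase the size bound through genuine power sets of rank-initial segments. Since rank is absolute ($V_\eta^N=V_\eta\cap N$ for transitive $N$) and injections are absolute, an injection of a set $x$ into $\mathcal{P}(V_{\omega+\xi})$ inside $N$ witnesses the true inequality $|x|\le 2^{\beth_\xi}=\beth_{\xi+1}$, regardless of how $N$ miscomputes power sets. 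I would therefore let $T''$ assert ``$V=L[A]$'', a weak fragment, that $A$ codes a transitive model of $T_\alpha$, and that every set injects into $\mathcal{P}(V_{\omega+\xi})$ for some $\xi$ below the height of that model; this caps the height at $\beth_\alpha$, with witness $L_{\beth_\alpha}[A]$ for suitable $A$.

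The delicate points, where I expect to spend the most effort, are the following. First, one must choose the levels (for instance $\mathcal{P}(V_{\omega+\xi})$ versus $V_{\omega+\xi}$) so that the supremum of admissible sizes is \emph{exactly} $\beth_\alpha$, uniformly for successor and limit $\alpha$ and in the presence of failures of $\GCH$: the naive successor clause overshoots to $(\beth_\eta)^+$ when $2^{\beth_\eta}>(\beth_\eta)^+$, so the exact matching of cap to $\beth_\alpha$ is the real technical content of the beth case. Second, one must verify that the weak fragment of set theory built into $T'$ and $T''$ suffices to run these definitions — to evaluate satisfaction on the coded $T_\alpha$-model, to compute order types, and to form the relevant $V_{\omega+\xi}$ — even at singular heights $\aleph_\alpha$ and $\beth_\alpha$, where full replacement may fail; and that in each case the witnessing $L[A]$ genuinely satisfies ``$V=L[A]$'' while carrying the intended cardinal, rank, and $T_\alpha$-structure.
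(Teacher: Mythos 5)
Your treatment of $\omega_1$ and of $\aleph_\alpha$ is correct and is essentially the paper's own argument: the paper's theory codes a height-maximal model of $T_0$ internally (as $L_\gamma[A_0]$ with $\gamma$ internally maximal) and demands that the order type of the infinite cardinals below any cardinal of the model be smaller than $\gamma$; your version places the coded $T_\alpha$-model directly on the cardinals, but the mechanism is identical --- downward absoluteness of ``being a cardinal'' forces the cardinals of any transitive model of height $>\aleph_\alpha$ to have order type at least $\alpha+1$, while maximality of $\alpha$ for $T_\alpha$ caps that order type by $\alpha$. One point you should make explicit: for your transitive collapse to exist, the coded structure must be well-founded in $V$, not merely in $N$. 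This does follow --- its ordinals are the cardinals of $N$, a true well-order, and it satisfies $V=L[A_0]$, so membership decreases internal constructibility rank --- but it needs saying; the paper's device of coding the inner model as an actual $L_\gamma[A_0]$, which is literally transitive, avoids the issue entirely.

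The $\beth_\alpha$ case, however, is genuinely incomplete, and the difficulty you flag is not a deferrable technicality: it is fatal to $T''$ as stated. Take $\alpha=\eta+1$ a successor with $2^{\beth_\eta}>\beth_\eta^{+}$ (already $\alpha=1$ with $2^{\aleph_0}>\aleph_1$ exhibits the problem). The cap fails: pick $\delta$ with $\beth_\alpha<\delta<\beth_\alpha^{+}$ and let $A$ code the true $V_{\omega+\eta}$, an injection of $\delta$ into the true $\mathcal{P}(V_{\omega+\eta})$ (one exists, since $|\delta|=\beth_\alpha=|\mathcal{P}(V_{\omega+\eta})|$), injections for the remaining sets by standard bookkeeping, and the data for the coded $T_\alpha$-model; then $L_\delta[A]$ is a transitive model of $T''$ of height $\delta>\beth_\alpha$. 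In fact such heights are unbounded in $\beth_\alpha^{+}$ and never attain it, so $T''$ has no maximal height at all and witnesses nothing in $S$. The variant with $V_{\omega+\xi}$ in place of $\mathcal{P}(V_{\omega+\xi})$ fails in the opposite direction: the intended model of height $\beth_\alpha$ contains the true cardinal $\beth_\eta^{+}<2^{\beth_\eta}$, which is a cardinal of that model by downward absoluteness yet injects into no set of true size $\beth_\eta$, so the intended model does not satisfy that theory (it pins $\beth_\eta^{+}$ instead). The obstruction is that a true cardinal strictly between $\beth_\eta$ and $2^{\beth_\eta}$ admits no upward-absolute certificate of being below $2^{\beth_\eta}$, while the ordinal $2^{\beth_\eta}$ itself, sitting inside a taller model, cannot be excluded by such certificates either; your proposed remedies all live on one or the other horn of this dilemma. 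In fairness, the paper disposes of this case with the single sentence ``the argument for $\beth_\alpha$ is analogous,'' and the naive analogue of its $\aleph_\alpha$ clause hits exactly the same wall, so you have put your finger on a real subtlety; but as a proof of the stated lemma your proposal is incomplete at precisely this point.
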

\begin{proof}
For $\omega_1$, take $T$ to contain the statement "every ordinal is countable". 

In general, in order to see that if $\alpha \in S$ then $\aleph_{\alpha} \in S$, let us consider the non-trivial case that $\alpha < \aleph_{\alpha}$. Let $T_0$ witness $\alpha \in S$. Let $T_1$ be the theory "$A$ codes two predicates, $A_0, A_1$. $L[A_1]$ thinks that $\exists \gamma, L_\gamma[A_0] \models T_0$ and there is a maximal such $\gamma$. For every infinite cardinal $\kappa$ there is $\beta < \gamma$ such that $\otp \{\mu < \kappa \mid \mu \text{ is an infinite cardinal}\} = \beta$."

The argument for $\beth_\alpha$ is analogous.
\end{proof}

\begin{theorem}\label{thm:S-subset-of-Sp-max}
Assume that there is no inner model with a measurable cardinal. Then \[S \subseteq \{\max \mathfrak{Sp}_T \mid T \text{ is a theory and }\max \mathfrak{Sp}_T\text{ exists}\}.\]
\end{theorem}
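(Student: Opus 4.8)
The plan is to reduce the statement to the construction of a single extension of $T$ with a \emph{rigid} top model, and to extract the needed rigidity from the anti-large-cardinal hypothesis via the core model.

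First I would use the standing assumption $V = L[A] \in T$ to pass to transitive models. If $M \models T$ has well-founded ordinals, then, since $M$ believes every set is constructed from $A$ along its ordinals, the genuine well-foundedness of $\Ord^M$ propagates to all of $M$; hence $M$ collapses to a transitive $L_\alpha[\bar A]$ of the same height. Thus witnessing $\alpha \in S$ by $T$ yields a transitive model of $T$ of height exactly $\alpha$, and no transitive model of $T$ exceeds $\alpha$. The decisive structural remark is that this bound is inherited downward along a coding device: if $T'$ asserts $V=L[A]$ together with ``$A$ codes a predicate $A_0$ with $L[A_0]\models T$'', then any transitive $N'\models T'$ of height $\beta$ has, in $V$, a genuine transitive inner model $L[A_0^{N'}]$ of $T$ of height $\beta$, whence $\beta \le \alpha$ and $\mathfrak{Sp}_{T'} \subseteq \alpha+1$. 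So the whole theorem reduces to: produce such a $T'$ possessing a maximal (non-embeddable) transitive model of height exactly $\alpha$; that model realizes $\max\mathfrak{Sp}_{T'} = \alpha$.

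For the construction I would let $T'$ further assert that $A$ codes a predicate $W$ subject to the first-order properties pinning it as (an initial segment of) the Dodd--Jensen core model $K$, which exists because there is no inner model with a measurable. The intended top model is $M = L_\alpha[A]$ with $A = \langle A_0, K\restriction\alpha\rangle$, where $A_0$ codes a height-$\alpha$ witness for $T$; relativizing the $L$-hierarchy to a set of ordinals and to $K\restriction\alpha$ does not change the height, so $M$ has height $\alpha$, and I would arrange $A \subseteq \Ord$ so that $M$ is literally determined by its ordinals together with $A \cap \alpha$. Routing $T$ through the inner model $L[A_0]$ lets $M$ satisfy $T'$ without itself modeling $T$, which is exactly what frees us to append the predicate $W$; the purpose of $W$ is to make $M$ an iterable structure, so that a hypothetical embedding of $M$ can be fed to the comparison machinery.

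Now suppose toward a contradiction that $j \colon M \to N$ is non-trivial with $N$ transitive. As $N \equiv M \models T'$, $N$ is a transitive model of $T'$, so $\Ord^N$ has order type $\le \alpha$; since $j\restriction\Ord^M$ order-embeds $\alpha$ into $\Ord^N$, the height of $N$ is exactly $\alpha$ and $j$ restricts to an order-embedding of $\alpha$ into itself. If $j$ has no critical point, then $j\restriction\Ord = \mathrm{id}$; because $M \models V=L[A]$ with $A \subseteq \Ord$, this forces $A^M = A^N$ and, every element being definable from ordinals and $A$, gives $N = M$ and $j = \mathrm{id}$, against non-triviality. If $j$ has critical point $\kappa < \alpha$, then $j$ restricts to a non-trivial elementary embedding of $W^M = K\restriction\alpha$ (an iterable premouse) with critical point $\kappa$; the derived $M$-ultrafilter $U = \{X \in \power(\kappa)^M : \kappa \in j(X)\}$ is then carried by an iterable premouse, so by the Dodd--Jensen theory it yields an inner model with a measurable cardinal, contradicting the hypothesis. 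Hence no non-trivial $j$ exists, $M$ is maximal, $\alpha \in \mathfrak{Sp}_{T'}$, and $\max\mathfrak{Sp}_{T'} = \alpha$.

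The main obstacle is precisely the critical-point case. Between short transitive models there \emph{do} exist non-trivial elementary embeddings (inclusions $L_\gamma \hookrightarrow L_\delta$, for instance), so the hypothesis cannot be used to forbid embeddings outright; rather it must be leveraged (i) to guarantee a rigid representative at the very top height $\alpha$, and (ii) to ensure that the ultrafilter derived from any critical point genuinely produces an inner model with a measurable. Both points hinge on $W$ being an honestly iterable initial segment of $K$ and on the embedding respecting it, so the delicate technical core is the iterability transfer, carried out while verifying that coding $K\restriction\alpha$ neither lowers the maximal height below $\alpha$ nor disturbs the bound inherited through the inner model $L[A_0]$.
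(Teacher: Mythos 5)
Your overall skeleton is the same as the paper's: route $T$ through an inner model $L[A_0]$ so that every transitive model of $T'$ has height at most $\alpha$, dispose of embeddings without critical points using $V = L[A]$, code (an initial segment of) the Dodd--Jensen core model into the top model, and convert a critical point into an inner model with a measurable. Those framing steps are correct. The problem is that the sentence ``the derived $M$-ultrafilter $U$ is then carried by an iterable premouse, so by the Dodd--Jensen theory it yields an inner model with a measurable cardinal'' compresses exactly the part of the argument that constitutes the proof, and as stated it conceals two genuine gaps. The first: for $U = \{X \in \power(\kappa)^K : \kappa \in j(X)\}$ to be a $K$-ultrafilter at all, you need $\power(\kappa)^K \subseteq K\restriction\alpha$, i.e.\ $\alpha > (\kappa^+)^K$; otherwise ``$\Ult(K,U)$'' is not even defined. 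Nothing in your construction rules out, say, that every ordinal in $(\kappa,\alpha)$ has cardinality $\kappa$ in the relevant models. The paper proves $\gamma > \kappa^+$ as a separate lemma, and that proof is what forces the extra first-order content of its $T'$ (every regular ordinal is a cardinal, explicit witnesses for cardinality and cofinality computations, existence of Mostowski collapses) together with a predicate $B$ coding those witnesses: one uses them to collapse $j\image\alpha$ inside the target model and conclude $j(\kappa) \geq \kappa^+$. Your $T'$, which only pins $W$ as $K\restriction\alpha$, does not support this step.

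The second gap is more serious. Even with a genuine $K$-ultrafilter in hand, Dodd--Jensen yields an inner model with a measurable only if $\Ult(K,U)$ is well-founded (equivalently, if $U$ has enough countable completeness). Since $U$ is derived from an embedding between set-sized structures rather than from an embedding of $V$, well-foundedness is not automatic, and it does not follow from the internal iterability of the premouse $K\restriction\alpha$: iterability is a property of the structure, whereas $U$ is external to it, and ill-founded external ultrapowers of iterable structures are easy to produce. This is precisely what the entire second half of the paper's proof establishes: assuming a descending sequence $[f_n]_U$, the set $\{f_n \mid n < \omega\}$ is covered by a $K$-set of size $\aleph_1^V < \kappa$ via the covering lemma (this is where the separately proved weak inaccessibility of $\kappa$ enters, to guarantee $\aleph_1^V < \kappa$); then the pattern sets $X_{\alpha,\beta}$ and the trace $\{(\alpha,\beta) \mid \kappa \in j(X_{\alpha,\beta})\}$ are pushed into $K$ using acceptability and the agreement $K'_\zeta = K_\zeta$ below the critical point; finally a point of the nonempty intersection of $\mathcal{Y}$ gives an infinite $\in$-descending sequence of ordinals. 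You flag ``iterability transfer'' as the delicate core, but that is a misdiagnosis: the missing idea is the covering-lemma argument for well-foundedness of the external ultrapower, and it appears nowhere in your proposal.
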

\begin{proof}
Let us denote $\mathfrak{Sp}_{max} = \{\max \mathfrak{Sp}_T \mid T \text{ is a theory and }\max \mathfrak{Sp}_T\text{ exists}\}$, and let us show that $S\subseteq \mathfrak{Sp}_{max}$. 

First, as both sets contain $\omega_1$, we may restrict our attention to uncountable ordinals.

Let $\gamma \in S$ be uncountable, and let us assume that $M$ witnesses that, namely $M$ is a transitive model and there is no $N$ transitive with $M\equiv N$, and $N \cap \Ord > M \cap \Ord$. 

Let $T = Th(M)$. By our assumption, $M = L_\gamma[A]$. We would like to show that there is a predicate $B$, such that $L_{\gamma}[A,B]$ witnesses $\gamma \in \max \mathfrak{Sp}_{T'}$ for some $T'$. The theory $T'$ will state that $L_\gamma[A]$ is a model of $T$ and that $L_\gamma[B]$ is going to be a model of some fragment of $\ZFC$. In particular, $T'$ contains the following statements: 
\begin{itemize}
\item every regular ordinal is a cardinal,  
\item if $\zeta$ is a cardinal then $\zeta \leq \rho < \zeta^+$ if and only if there is $x\subseteq \zeta \times \zeta$ with $\otp x = \rho$,\footnote{In order for this equality to make sense, we interpret it as the existence of an order preserving bijection between $x$ and $\rho$.} and
\item if $a$ is a set of ordinals which is constructed at level $\beta$ and $\forall n < \omega \beta + n$ exists then the Mostowski collapse of $a$, $\pi_a \colon a \to \otp a$ exists.  
\end{itemize}
Assume that for every choice of $B$ this is not the case, so there is an elementary embedding $j \colon L_\gamma[A, B] \to L_{\gamma'}[A',B']$. 

Since $L_\gamma[A] \models T$, by elementarity $L_{\gamma'}[A']\models T$. As $\gamma$ is the maximal height of a transitive model of $T$, and $\gamma' \geq \gamma$, we conclude that $\gamma' = \gamma$. 

\begin{lemma}\label{lemma:B-can-code-cofinality}
For every predicate $A$, there is a predicate $B \subseteq L_\gamma$ such that $L_\gamma[A, B] \models T'$ and for every $\alpha < \gamma$ such that $\cf \alpha < \alpha$, there is an element of $L_\gamma[B]$ witnessing it. Moreover, for every $\alpha < \gamma$, there is bijection in $L_\gamma[B]$ between $\alpha$ and $|\alpha|$.
\end{lemma}
\begin{proof}
First, we may assume that $\gamma$ is a limit ordinal. Indeed, if $\gamma = \gamma' + n$ for some natural $n$ then note that the successor ordinals after $\gamma'$ are automatically singular in a definable way and there is a definable bijection between them and $|\gamma'|$. Moreover, the last item of $T'$ does not address elements that were constructed after the level $\gamma'$. So, we will just need to verify that the codinality of $\gamma'$ itself is computed correctly, and the argument for that does not differ from the argument in the case that $\gamma$ is limit. 

Let $\gamma$ be a limit ordinal. So $L_{\gamma} \supseteq \gamma \times \gamma \times \gamma$. For every singular $\alpha < \gamma$, let $g_\alpha \colon \cf \alpha \to \alpha$ be a witness for the singularity of $\alpha$. Take $B_1$ to be $\bigcup \{\alpha\}\times g_\alpha$. Similarly, take $B_2$ to introduce a bijection between $\alpha$ and $|\alpha$ for every $\alpha < \gamma$, 

In order to verify that the Mostowski collapses exist, we will define $B_3$ to be a predicate of the form $\bigcup\{\{a\}\times \pi_a \mid a\in \mathcal{P}(\gamma) \setminus \Ord\}$. Finally, take $B = B_1 \cup B_2 \cup B_3$.
\end{proof}
\begin{lemma}
Let $j \colon L_\gamma[B] \to L_\gamma[B']$ be non-trivial elementary embedding, where $B$ is as  in Lemma \ref{lemma:B-can-code-cofinality} above. 
Then, the critical point of $j$ is a weakly inaccessible cardinal.
\end{lemma}
\begin{proof}
Indeed, $\crit j$ must be regular in $L_\gamma[B]$, so it is regular in $V$. 

If $\crit j = \mu^+$ for some cardinal $\mu$, then in $L_\gamma[B]$ there are no regular cardinals between $\mu$ and $\mu^+$. By elementarity, it implies that in the target of $j$, which is $L_\gamma[B']$, there is no regular cardinal between $\mu$ and $j(\crit j) > \mu^+$. As being singular is $\Sigma_1$, we conclude that the same holds in $V$, contradicting the regularity of $\mu^+$.
\end{proof}
\begin{lemma}
Let $\kappa = \crit j$. Then $\gamma > \kappa^+$.
\end{lemma}
\begin{proof}
As before, we assume that $\gamma$ is a limit ordinal (otherwise, we will argue for the maximal limit ordinal below $\gamma$). 

Let $\kappa < \alpha < \gamma$. If $\kappa \leq \alpha < \kappa^{+}$, then there is $f \colon \kappa \to \alpha$, a bijection in $L_{\gamma}[B]$. So, $j(f) \in L_{\gamma}[B']$. As $\gamma$ is a limit ordinal, $j(f)\restriction \kappa$ is a member of $L_{\gamma}[B]$. Let $\pi \colon j\image \alpha \to \alpha$ be the Mostowski collapse, and as $j\image \alpha \in L_\gamma[B']$, and $L_{\gamma}[B']\models T'$, we conclude that $\pi \in L_{\gamma}[B']$ and thus $\pi\circ (j(f) \restriction \kappa) \in L_\gamma[B']$. In particular, $|\alpha|^{L_{\gamma}[B']} = \kappa$. As $j(\kappa)$ is a cardinal in $L_\gamma[B']$, is must be as least $\kappa^{+}$. We conclude that $\gamma > \kappa^+$. 
\end{proof}
Next, we would like to use Dodd-Jensen's covering lemma in order to derive an inner model with a measurable cardinal. Recall that if there is no inner model with a measurable cardinal then $K=K_{DJ}$ satisfies the covering lemma --- for every $X \subseteq \Ord$ there is $Y \in K$ such that $X \subseteq Y$ and $|Y| = |X| + \aleph_1^V$, \cite{DoddJensen}.

Let us pick $B$ to code $K_\gamma$. Namely, $L_{\gamma}[B]$ will satisfy $T'$ and in addition will contain a definable subclass which is $K_\gamma$. By the above lemmas, we know that $\gamma > \kappa^{+}\geq (\kappa^{+})^K$ and thus $j$ induces a $K$-ultrafilter, $U$. We need to show that the internal ultrapower, $\Ult(K, U)$, is well founded. Thus, there is non-trivial elementary embedding $\iota \colon K \to M$, and we can derive an inner model with a measurable cardinal. 

Indeed, let us assume that this is not the case. Then, there is a sequence of functions $f_n \colon \kappa \to \Ord$ such that $[f_n]_U$ is $E$-decreasing (there $E$ is the membership relation of the ultrapower). While the sequence is not in $K$, it is covered by a set of size $\aleph_1^V$ in $K$, by the covering lemma. 

Let $\{g_\alpha \mid \alpha < \rho\}$ be a covering set, with $|\rho| \leq \aleph_1^V$, so $\rho < \kappa$. Let $X_{\alpha,\beta} = \{\rho < \kappa \mid g_\alpha(\rho)\in g_\beta(\rho)\}$, and $\mathcal{X} = \{X_{\alpha,\beta} \mid \alpha \neq \beta\}$. 
The set $\mathcal{X}$ is a member of $K$ and its cardinality is strictly below $\kappa$. By acceptability, it belongs to $K_{\gamma}$ (as $\gamma > (\kappa^{+})^K$). 
Moreover, the collection $\{(\alpha, \beta) \in \rho^2 \mid \kappa \in j(X_{\alpha,\beta})\}$ belongs to $L_{\gamma}[B']$, as $\gamma$ is a limit ordinal, and thus to its version of $K$, that we will denote by $K'$. 
As $K_\gamma$ satisfies acceptability, by elementarity so does $K'$. So, this set must belong to $K'_\zeta$ for some $\zeta < \kappa$. 
Since $\crit j = \kappa$, $K'_\zeta = K_\zeta$ and thus this set belongs to $K_\gamma$.

Take \[\mathcal{Y} = \{X_{\alpha, \beta} \mid X_{\alpha, \beta}\in U\} \cup \{\kappa\setminus X_{\alpha,\beta} \mid X_{\alpha,\beta}\notin U\}.\] By the reasoning above, $\mathcal{Y} \in K$ and its intersection must be non-empty. But this is a contradiction, as any element $\delta$ in the intersection must satisfy that $f_n(\delta)$ is an infinite decreasing sequence of ordinals.
\end{proof}
\begin{question}
Can Theorem \ref{thm:S-subset-of-Sp-max} be proved without the anti-large cardinal hypothesis?
\end{question}
Let us add a couple of remarks on $\mathfrak{Sp}$. 
\begin{lemma}\label{lemma:reaching-to-measurable}
There is a recursive theory $T$ such that $\mathfrak{Sp}_T$ is bounded if and only if there is a measurable cardinal, and in this case, $\sup \mathfrak{Sp}_T$ is the first measurable.  
\end{lemma}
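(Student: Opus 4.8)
The plan is to take $T$ to be a recursive theory whose transitive models are, up to coding, the levels of the Dodd--Jensen core model $K$, enriched by witnesses to the $V$-cofinality structure. Concretely, $T$ asserts $V = L[A]$ where $A$ codes two things: first, a definable inner class which $T$ pins down, via the usual fine-structural and condensation axioms (exactly as in the proof of Theorem \ref{thm:S-subset-of-Sp-max}), to be an initial segment of the true $K$; and second, the cofinality- and cardinality-witnessing predicate produced in Lemma \ref{lemma:B-can-code-cofinality}, so that every transitive $M \models T$ computes $\cf$ and cardinality exactly as $V$ does below $M \cap \Ord$. I would also include the three schematic clauses of the theory $T'$ of Theorem \ref{thm:S-subset-of-Sp-max}. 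Condensation then guarantees that a transitive $M \models T$ has the form $M_\alpha = $ ``$K_\alpha$ together with the coded witnesses'', with $\alpha = M \cap \Ord$, and that such models occur at cofinally many $\alpha$. Note that by Lemma \ref{lemma:S-and-Sp-below-measurable} we already have $\mathfrak{Sp}_T \subseteq \mathfrak{Sp} \subseteq \kappa_0$ whenever a measurable $\kappa_0$ exists, giving the upper bound $\sup \mathfrak{Sp}_T \le \kappa_0$ for free.

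The core of the argument is to show that each $M_\alpha$ with $\alpha < \kappa_0$ (and each $M_\alpha$ whatsoever if there is no measurable) is maximal. Suppose toward a contradiction that $j \colon M_\alpha \to N$ is non-trivial with $N$ transitive. By elementarity $N \models T$, so by condensation $N = M_\beta$ for some $\beta \ge \alpha$, and $\kappa := \crit j < \alpha$ is regular in $M_\alpha$; since $M_\alpha$ computes cofinalities correctly, $\kappa$ is regular in $V$. Just as in the final part of the proof of Theorem \ref{thm:S-subset-of-Sp-max}, $j$ induces a $K$-ultrafilter $U$ on $\kappa$ (using $\alpha > \kappa^+$, which holds by the analogues of the supporting lemmas proved there), and by iterability of $K$ the ultrapower $\Ult(K, U)$ is well founded; hence $\kappa$ is measurable in $K$.

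At this point I would invoke the covering lemma for $K$ in the presence of a measurable (Dodd--Jensen): a cardinal that is measurable in $K$ but not in $V$ must be singular in $V$. Since $\kappa$ is regular in $V$, it is therefore measurable in $V$, so $\kappa \ge \kappa_0 > \alpha > \kappa$, a contradiction; hence no such $j$ exists and $M_\alpha$ is maximal. This yields both halves of the statement at once. If there is a measurable, the maximal $M_\alpha$ are cofinal in $\kappa_0$, so $\sup \mathfrak{Sp}_T = \kappa_0$; if there is no measurable, then no $K$-measurable is regular in $V$, the displayed contradiction is reached for every $\alpha$, and the maximal $M_\alpha$ are cofinal in $\Ord$, so $\mathfrak{Sp}_T$ is unbounded.

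I expect the main obstacle to be precisely the core-model input in the crucial step: that a cardinal measurable in $K$ but not in $V$ is singular in $V$. This is what pins the supremum at the \emph{first measurable of $V$} rather than at the first measurable of $K$, and the two genuinely differ --- after Prikry forcing over $L[U]$ the $K$-measurable is singularized in $V$, while $K$ retains the measure, so a naive theory axiomatizing full $K$-levels would stop short at the least $K$-measurable. It is exactly the cofinality coding built into $T$ that reduces us to this statement, by forcing $\crit j$ to be $V$-regular; so the second delicate point is to verify that this coding is compatible with condensation, i.e.\ that transitive models of $T$ really are $K$-levels computing $V$-cofinalities, and that such models exist cofinally below $\kappa_0$ (respectively in $\Ord$).
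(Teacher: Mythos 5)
Your proposal takes a core-model route, but it misses the single elementary idea on which the paper's proof rests, and the core-model steps you substitute for it do not go through. The paper's trick is to make the models contain a \emph{true} power set: under $\GCH$ one takes $T$ to be $\ZFC^-$ plus ``there is a largest cardinal $\mu$ and every set has cardinality $\leq\mu$'', whose transitive models include every $H(\rho^+)$; in general one takes the theory of $L_\alpha[A]$ where $A$ codes an enumeration of $\mathcal{P}(\rho)$ in order type $\alpha$ (plus Separation and existence of power sets below $\rho$). For such a model $M$, any non-trivial elementary $j\colon M\to N$ with $N$ transitive must have a critical point at most $\rho$ (otherwise $j$ stretches the enumeration and produces a subset of $\rho$ outside the true $\mathcal{P}(\rho)$), and then, since $\mathcal{P}(\crit j)\subseteq M$, the derived ultrafilter $\{X\subseteq \crit j \mid \crit j\in j(X)\}$ is a genuine $\crit j$-complete non-principal ultrafilter \emph{in $V$}. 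Thus every such model below the least measurable is maximal, these heights are cofinal in the least measurable (respectively in $\Ord$ if there is none), and Lemma \ref{lemma:S-and-Sp-below-measurable} caps $\mathfrak{Sp}_T$ at the least measurable. No fine structure, condensation, or covering is needed. Your $K$-levels do not contain true power sets, and that is exactly what forces you into core-model theory.

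The core-model steps themselves have two fatal gaps. First, well-foundedness of $\Ult(K,U)$ for an \emph{externally} derived $K$-ultrafilter does not follow from ``iterability of $K$''; this is precisely the delicate point in the paper's proof of Theorem \ref{thm:S-subset-of-Sp-max}, where it is established via the covering lemma \emph{under the hypothesis that there is no inner model with a measurable} --- a hypothesis you cannot use here, since the case your lemma must handle (pinning $\sup\mathfrak{Sp}_T$ at the first measurable) is exactly the case in which measurables exist and Dodd--Jensen covering fails. (The paper even records as an open question whether that hypothesis can be removed.) Second, even granting well-foundedness, what one obtains is an elementary embedding of $K$, i.e.\ an inner model with a measurable --- not ``$\kappa$ is measurable in $K$''; the Dodd--Jensen $K$ itself satisfies that there is no measurable cardinal, so your pivotal statement ``measurable in $K$ but not in $V$ implies singular in $V$'' cannot even be instantiated, and the version for inner models in general is false: starting from $L[U]$ and forcing with Kunen's poset adding a $\kappa$-Souslin tree, $\kappa$ remains inaccessible (so $V$-regular) but is no longer measurable, indeed not weakly compact, in $V$, while it is measurable in the inner model $L[U]$. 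So $V$-regularity of the critical point, which your cofinality coding secures, cannot upgrade inner-model measurability to $V$-measurability; only containing the true $\mathcal{P}(\crit j)$ inside the model does that. (There is also a secondary issue of whether a recursive theory can pin down genuine levels of the true $K$ at all, but the argument breaks before this matters.)
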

\begin{proof}
Let us present first the proof under the hypothesis of $\GCH$, since the theory in this case is slightly more natural. 
Let $T$ be $\ZFC^-$ together with collection and the assertion that there is a maximal cardinal, $\mu$, and every set $x$ is of cardinality $\leq \mu$ (note that $H(\mu^+)\models T$ for all $\mu$ infinite). 

Let $\kappa$ be the least measurable, if there is one, and otherwise $\Ord$. For every cardinal $\rho < \kappa$, $M = H(\rho^+)$ is a maximal model of $T$. Otherwise, there is some transitive model $N$ and an elementary embedding $j \colon M \to N$. Clearly, $j$ has a critical point (as otherwise, $|\rho^{+}|^N \leq \rho$). Let $\zeta = \crit j$. Then, since $\mathcal{P}(\zeta) \subseteq M$, $\zeta$ is measurable.

We conclude that every successor cardinal below $\kappa$ belongs to $\mathfrak{Sp}_T$.

Let us deal with the general case. Let us look at the theory of $L_{\alpha}[A]$ where for some $\rho < \alpha$, $A$ codes an enumeration of $\mathcal{P}(\rho)$ in order type $\alpha$ and this model satisfies Separation and that the power set of every $\zeta < \rho$ exists. Let us assume that this model is non-maximal. Then, there is an elementary embedding $j \colon L_{\alpha}[A] \to L_{\beta}[\tilde{A}]$. 

First, this embedding must have a critical point. Otherwise, the elementary embedding $j$ "stretches" the enumeration given by $A$, and introduce a new subset of $\rho$. Indeed, $j(\rho) > \rho$.

Thus, we can obtain a measure on $\kappa = \crit j$, as $\mathcal{P}(\kappa) \subseteq L_\alpha[A]$.   
\end{proof}

The phenomena that was obtained for a measurable cardinal is by no means unique. Let us first define the general scheme. 
\begin{definition}
A theory $T$ is \emph{reaching} if $\mathfrak{Sp}_T$ is non-empty and does not contain a maximal element. 
\end{definition}
The previous theorem shows that there is a theory $T$ such that $\ZFC$ proves that $T$ is reaching and $\mathfrak{Sp}_T$ is either unbounded or $\sup \mathfrak{Sp}_T$ is the first measurable. 
\begin{lemma}
Let $\Phi(x)$ be a property which holds only for limit cardinals and if $V \models \Phi(\kappa)$ and $M$ is a transitive model of some fragment of $\ZFC^-$, then $M \models \Phi(\kappa)$. 

Then, there is a reaching theory $T_\Phi$ such that either $\mathfrak{Sp}_T$ is unbounded 
or $\sup \mathfrak{Sp}_T = \kappa$, where $\kappa$ is the least cardinal such that $\Phi(\kappa)$.
\end{lemma}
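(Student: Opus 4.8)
The plan is to mimic the proof of Lemma \ref{lemma:reaching-to-measurable}, replacing the clause that forbids measurable cardinals with one that forbids $\Phi$. Concretely, I would let $T_\Phi$ consist of $V = L[A]$ together with a weak fragment of $\ZFC^-$ (Separation, and the existence of $\power(\zeta)$ for every $\zeta$ below the largest cardinal), the assertion that $A$ codes an enumeration of $\power(\rho)$ in the order type of the model's ordinals where $\rho$ is the largest cardinal, and the single sentence ``no cardinal satisfies $\Phi$''. Witnessing models are thus of the form $L_\alpha[A]$ in which $A$ lists the true power set of the top cardinal $\rho$. The argument then splits into an upper bound $\sup \mathfrak{Sp}_{T_\Phi} \le \kappa$ and a lower bound producing maximal models of heights cofinal in $\kappa$.

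For the upper bound I would use downward absoluteness directly. If $M \models T_\Phi$ is transitive and $\kappa \in M$, then $\kappa$, being a cardinal of $V$, is a cardinal of $M$; since $V \models \Phi(\kappa)$ and $M$ models our fragment of $\ZFC^-$, the hypothesis on $\Phi$ yields $M \models \Phi(\kappa)$, contradicting ``no cardinal satisfies $\Phi$''. Hence $M \cap \Ord \le \kappa$ for every model of $T_\Phi$, so $\sup \mathfrak{Sp}_{T_\Phi} \le \kappa$. When no cardinal satisfies $\Phi$ we set $\kappa = \Ord$ and this bound is vacuous, which is precisely the ``unbounded'' alternative.

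For the lower bound and reachingness I would, for cofinally many cardinals $\rho < \kappa$, build a maximal model as in the second half of Lemma \ref{lemma:reaching-to-measurable}. Taking $M = L_\alpha[A]$ with $A$ enumerating the genuine $\power(\rho)$, any nontrivial elementary $j \colon M \to N$ into a transitive $N$ must have a critical point $\crit j = \zeta \le \rho$ (otherwise $j$ stretches the enumeration and manufactures a new subset of $\rho$); and since $\power(\zeta)^M = \power(\zeta)^V$, the derived ultrafilter $\{X \subseteq \zeta \mid \zeta \in j(X)\}$ is a genuine measure, so $\zeta$ is measurable in $V$. As $\kappa$ is a limit cardinal, such $\rho$ are cofinal in $\kappa$ and the corresponding heights are cofinal in $\kappa$; this gives $\sup \mathfrak{Sp}_{T_\Phi} \ge \kappa$ with no maximal element, i.e.\ $T_\Phi$ is reaching.

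The main obstacle is exactly this maximality step, which as stated reduces to the assertion that there is no measurable cardinal strictly below $\kappa$, together with the fact that the models $L_\alpha[A]$ can be chosen to genuinely satisfy ``no cardinal satisfies $\Phi$'' rather than merely inheriting it one-sidedly through downward absoluteness. For the first point I would argue that a downward absolute property cannot detect measurability --- measurability is not downward absolute, since the measure need not belong to a transitive submodel --- so $\Phi$ is insensitive to the presence of measurables and must already hold at or below the least measurable; consequently $\kappa$ is at most the least measurable, no measurable lies below $\rho < \kappa$, and the critical point above is impossible. For the second point I would restrict $\rho$ to the club, hence cofinal, set of levels at which $M$ is $\Phi$-correct, i.e.\ contains witnesses to the failure of $\Phi$ at each of its cardinals; at such levels $M \models \neg\Phi(\zeta)$ for every cardinal $\zeta \le \rho < \kappa$, since $V \models \neg\Phi(\zeta)$ and the relevant witness is internal to $M$. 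Making the insensitivity-to-measurability claim fully precise for an arbitrary downward absolute $\Phi$ is the delicate heart of the argument.
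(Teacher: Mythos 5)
Your construction is essentially the paper's own: the paper's entire proof consists of taking $T_\Phi$ to be the relevant fragment of set theory together with ``there is a maximal cardinal'' and ``there is no $\alpha$ with $\Phi(\alpha)$'', leaving all verification implicit, to be read off from the proof of Lemma~\ref{lemma:reaching-to-measurable}; your theory and your upper-bound argument via downward absoluteness match this exactly. The problem is the step you yourself flag as unresolved, and the justification you offer for it is not valid: from the fact that measurability is not downward absolute one cannot conclude that a downward absolute $\Phi$ ``must already hold at or below the least measurable''---that is a non sequitur. Moreover this claim is precisely what the truth of the lemma hinges on: by Lemma~\ref{lemma:S-and-Sp-below-measurable}, $\mathfrak{Sp}_{T_\Phi}$ is contained in the least measurable cardinal, so if the least $\Phi$-cardinal $\kappa$ lay above it, the conclusion $\sup \mathfrak{Sp}_{T_\Phi} = \kappa$ would simply be false. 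It therefore cannot be waved through as a heuristic.

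The missing idea is the same device used in the proof of Lemma~\ref{lemma:S-and-Sp-below-measurable}: sufficiently long iterations. Suppose $\mu \leq \kappa$ were measurable. Iterate a normal measure on $\mu$ to obtain an elementary $j \colon V \to N$ with $N$ transitive and $j(\mu) > \kappa$. Since $N$ is a transitive (class) model of the fragment, downward absoluteness gives $N \models \Phi(\kappa)$, hence $N \models \exists \alpha < j(\mu)\, \Phi(\alpha)$, and elementarity of $j$ pulls this back to $V \models \exists \alpha < \mu\, \Phi(\alpha)$, contradicting the minimality of $\kappa$. Thus every measurable lies strictly above $\kappa$, which is exactly what your critical-point argument needs. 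A second, smaller gap: your claim that the ``$\Phi$-correct'' levels form a club below $\kappa$ presupposes that $\neg\Phi$ is witnessed by sets inside the model; the hypothesis only says that $\Phi$ goes down from $V$ to transitive models (equivalently, that $\neg\Phi$ goes up from models to $V$), not the converse, so this requires an argument. For the instances in the Corollary (inaccessible, Mahlo, and so on) it holds because those properties and their negations are computed correctly by models containing the true power sets, but for an arbitrary $\Phi$ as in the hypothesis it is not automatic. In fairness, the paper's own one-line proof addresses neither point.
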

\begin{proof}
Let $T$ be the statement that the corresponding fragment of set theory holds, there is a maximal cardinal and there is no $\alpha$ such that $\Phi(\alpha)$ holds. 
\end{proof}

\begin{corollary}
There is a reaching theory $T_{inacc}$ such that $\sup \mathfrak{Sp}_T$ is the least inaccessible if there is one, and unbounded otherwise. Similarly for the least Mahlo, greatly Mahlo and reflecting cardinal.
\end{corollary}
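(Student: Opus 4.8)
The plan is to obtain each of the four clauses as a direct instance of the preceding lemma, taking $\Phi(x)$ to be, respectively, "$x$ is inaccessible", "$x$ is Mahlo", "$x$ is greatly Mahlo", and "$x$ is reflecting" (i.e.\ $x$ is inaccessible and $V_x \prec_{\Sigma_2} V$). Since that lemma already manufactures, from any admissible $\Phi$, a reaching theory $T_\Phi$ with $\sup \mathfrak{Sp}_{T_\Phi}$ equal to the least $\kappa$ satisfying $\Phi(\kappa)$ (and unbounded if there is none), the entire remaining task is to check, for each of these four properties, the two standing hypotheses of the lemma: that $\Phi$ holds only of limit cardinals, and that $\Phi$ is downward absolute, in the sense that $V \models \Phi(\kappa)$ implies $M \models \Phi(\kappa)$ for every transitive model $M$ of a sufficiently large fragment of $\ZFC^-$ with $\kappa \in M$. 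The first hypothesis is immediate in all four cases, since each property entails inaccessibility and hence being a limit cardinal.

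For the downward absoluteness of inaccessibility I would argue that both defining clauses survive the passage to $M$: a map witnessing $\cf(\kappa) < \kappa$ inside $M$ already belongs to $V$, so regularity is preserved; and since $\mathcal{P}(\lambda)^M \subseteq \mathcal{P}(\lambda)^V$, an $M$-injection of $\kappa$ into $\mathcal{P}(\lambda)^M$ would be a $V$-injection of $\kappa$ into a set of $V$-cardinality below $\kappa$, so the strong-limit (equivalently limit-cardinal) clause is preserved as well; together with the downward absoluteness of "is a cardinal" this gives inaccessibility in $M$. For Mahlo I would add that a club of $\kappa$ lying in $M$ is genuinely closed and unbounded in $V$, while $\Inacc^V \cap \kappa \subseteq \Inacc^M \cap \kappa$ by the previous paragraph; hence the $V$-stationary set $\Inacc^V \cap \kappa$ meets every $M$-club, so $\Inacc^M \cap \kappa$ is stationary in $M$ and $\kappa$ stays Mahlo. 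Greatly Mahlo is handled by iterating exactly this "witnesses only shrink" phenomenon along the Mahlo operation: the key observation is that a set which is stationary in $V$ remains stationary in $M$ (fewer clubs), so the Mahlo operation computed in $M$ dominates the one computed in $V$, and one checks that a normal filter witnessing greatly-Mahloness in $V$ transfers to $M$. This last point is the only routine-but-delicate step here, since one must ensure the witnessing filter, and not merely its value on $M$-sets, is available in $M$.

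The genuinely hard case, and the step I expect to be the main obstacle, is the reflecting cardinal, because $V_\kappa \prec_{\Sigma_2} V$ is an assertion about the whole universe and its cumulative hierarchy rather than a bounded combinatorial fact, so it cannot be transferred by a mere shrinking-of-witnesses argument. My plan is to invoke the L\'evy normal form, whereby a $\Sigma_2$ assertion is equivalent to one of the shape "$\exists \beta\, V_\beta \models \psi$", so that $\kappa$ being reflecting becomes: for every $\psi$ and every parameter $a \in V_\kappa$, if some $V_\beta$ satisfies $\psi(a)$ then some $V_\beta$ with $\beta < \kappa$ already does. To push this down to $M$ I must control how $M$ computes the levels $V_\beta$ for $\beta < \kappa$; this is precisely where the standing assumption $V = L[A]$ and the fragment of set theory built into $T_\Phi$ are meant to do the work, by pinning down a canonical construction of the relevant initial segments that $M$ is forced to agree with. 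The crux, and where I expect the real difficulty to lie, is exactly this potential mismatch between the cumulative hierarchy as computed in $M$ and in the true $V$: absent such agreement a reflection instance holding in $V$ need not descend to $M$, and securing the descent is the heart of the reflecting case. Granting it, the four verifications together with the preceding lemma yield the reaching theories $T_{inacc}$ and its analogues for the Mahlo, greatly Mahlo, and reflecting properties, with the stated suprema.
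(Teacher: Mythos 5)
Your overall strategy is exactly the intended one: the paper gives no separate proof of this corollary, which is meant to follow by instantiating the preceding lemma with $\Phi(x)$ being inaccessibility, Mahloness, great Mahloness, and reflection, so the entire content lies in verifying the lemma's two hypotheses. Your verifications for inaccessibility and Mahloness are correct: regularity, being a cardinal, strong limitness, clubness and stationarity all behave as you say under passage to a transitive submodel. For greatly Mahlo, the delicate point you flag is fillable along the lines you indicate: rather than transferring the $V$-filter itself, one generates inside $M$ the minimal $M$-normal filter containing the regulars and closed under the $M$-computed Mahlo operation, and proves its properness by induction, using your monotonicity observation to show that each generating set contains a set of the $V$-filter witnessing great Mahloness, hence is stationary in $V$ and a fortiori nonempty.

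The reflecting case, however, is a genuine gap, and the plan you outline does not close it. The obstruction is sharper than a general ``mismatch of cumulative hierarchies'': in a model $M \models T_\Phi$ of height above $\kappa$, the hierarchy is relative to the predicate, $H(\theta)^M = L_\theta[A \cap H(\theta)^M]$, and the relevant parameter $A \cap H(\kappa)^M$ has rank $\kappa$, so it is not available as a parameter for $\Sigma_2$-reflection in $V$, which only sees parameters in $V_\kappa$. Concretely, if $M \models$ ``some regular $\theta$ has $H(\theta) \models \varphi(a)$'', a L\"owenheim--Skolem argument does produce a small transitive model of $\varphi(a)$ inside $V_\kappa$, but nothing makes that model of the form $H(\theta')^M$ for an $M$-regular $\theta' < \kappa$, which is what $M \models$ ``$\kappa$ is reflecting'' demands; so the witness-shrinking template that handled the other three properties is unavailable in principle. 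Worse, if the reflection scheme is read in $M$'s full language (including $A$) and the least reflecting cardinal fails to be Mahlo, one can let $A$ code an unbounded subset of $\kappa$ all of whose accumulation points are singular, together with their singularizing functions; then $M$ sees the fact ``$A$ is unbounded'' holding at $\kappa$ but at no $M$-regular $\theta' < \kappa$, so $M$ thinks $\kappa$ is not reflecting and the lemma's downward absoluteness hypothesis fails outright. So the reflecting clause needs either a careful pinning down of the notion (pure $\in$-language, with an argument genuinely different from witness-shrinking, or extra requirements built into the theory), and your ``granting it'' leaves the fourth clause of the corollary unproven.
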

\begin{question}
Is there a theory $T$ such that  $\ZFC$ proves that it is reaching to the least weakly compact cardinal?
\end{question}
\section{Upwards absolute versions}
We are now move to deal with the upwards absolute versions of $S$ and $\mathfrak{Sp}$, $S^*, \mathfrak{Sp}^*$, (see Definition \ref{definition:S*}). 
\begin{lemma}
There is a theory $T$ such that if $\omega_1 \notin \mathfrak{Sp}^*_T$ then $\omega_1$ is weakly compact in $L$.
\end{lemma}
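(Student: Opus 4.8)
The plan is to take $T$ to be a weak fragment of $\mathrm{ZF}^-$ together with the axiom $V=L$ itself, that is, $V=L[A]$ in the degenerate case $A=\emptyset$. The reason for insisting on $V=L$ rather than a genuinely richer $V=L[A]$ is that it pins the witnessing model completely: any transitive model of $V=L$ whose ordinals are exactly $\omega_1$ is forced to be $L_{\omega_1^V}$, so there is a unique candidate and no ``collapsed'' competitors (an $A$ coding surjections onto every ordinal would otherwise produce rigid models of every height, trivialising the question); this is the essential use of the standing $V=L[A]$ convention. Under this choice $\omega_1\in\mathfrak{Sp}^*_T$ says exactly that $L_{\omega_1^V}$ has no nontrivial elementary embedding into a transitive structure in any generic extension, so the lemma is the contrapositive: if in some generic extension there is a transitive $N$ and a nontrivial elementary $j\colon L_{\omega_1^V}\to N$, then $\omega_1^V$ is weakly compact in $L$. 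First I would record the structural facts. By elementarity $N=L_\gamma$, and for $\kappa=\crit j$ the order-isomorphism argument (if $g\colon\lambda\to\kappa$ surjects in $L_{\omega_1^V}$ with $\lambda<\kappa$ then $j(g)$ has range $j\image\kappa$, forcing $j(\kappa)=\kappa$) shows $\kappa$ is a cardinal of $L$ below $\omega_1^V$; since $\omega_1^V$ is an $L$-cardinal, $(\kappa^+)^L\le\omega_1^V$, so $\power(\kappa)^{L_{\omega_1^V}}=\power(\kappa)^L$. All of this refers only to the absolutely computed $L_{\omega_1^V}$, hence is unaffected by the forcing.

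The main line of attack is to extract from $j$ a genuine $L$-measure and then run the core-model machinery exactly as in the proof of Theorem~\ref{thm:S-subset-of-Sp-max}. Put $U=\{X\in\power(\kappa)^L:\kappa\in j(X)\}$. Because every $L$-sequence of length $<\kappa$ of subsets of $\kappa$, and every regressive function $\kappa\to\kappa$ of $L$, already appears in $L_{(\kappa^+)^L}\subseteq L_{\omega_1^V}=\dom j$, the usual computation shows $U$ is $\kappa$-complete and normal, and $j$ itself witnesses that $\Ult(L,U)$ is well founded as far as the functions in $L_{\omega_1^V}$ reach. The goal is to bootstrap this to full iterability, so that $U$ is an iterable $L$-measure; by Kunen's analysis this yields $0^\#$. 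As $0^\#$ is forcing-absolute it then exists already in $V$, whence every uncountable cardinal of $V$---in particular $\omega_1^V$---is weakly compact (indeed much more) in $L$, which is the desired conclusion. Where iterability is available only in a weaker form, I would instead argue by contradiction using the Dodd--Jensen covering lemma as in Theorem~\ref{thm:S-subset-of-Sp-max}: assuming there is no inner model with a measurable, $K=K_{DJ}$ satisfies covering, and a covering set for the $U$-decreasing functions yields, via the set $\mathcal Y$ of the relevant $X_{\alpha,\beta}$ and their complements, a nonempty intersection producing an infinite descending sequence of ordinals---the contradiction forcing the requisite inner-model strength, and with it $0^\#$ and the weak compactness of $\omega_1^V$ in $L$.

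The hard part is that $\kappa$ lies \emph{strictly below} the height of $M$, so $j$ reflects directly only at $\kappa$ and on its face says nothing about $\omega_1^V$; the entire content is that the mere existence of such an embedding of the tall, thin model $L_{\omega_1^V}$ already forces global $L$-largeness. Two points are delicate. First, $j$ is defined only on $L_{\omega_1^V}$, i.e. only on $L$-functions whose range is bounded below $\omega_1^V$, so upgrading the well-foundedness and amenability of $U$ to genuine iterability of the $L$-measure (and hence to $0^\#$) must be carried out without the missing cofinal functions---presumably by iterating inside $L_{\omega_1^V}$ and exploiting that $\omega_1^V$ is regular in $L$; establishing weak amenability of $U$, $\{U\cap A:A\in L,\ |A|^L=\kappa\}\subseteq L$, from an embedding not lying in $L$ is the crux. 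Second, one must control generic extensions that collapse $\omega_1^V$ to be countable, where $L_{\omega_1^V}$ becomes a countable structure that could a priori acquire harmless embeddings; here the absoluteness of $0^\#$ together with the $V=L$ restriction (which rules out the wide and collapsed models that would otherwise supply embeddings for free) is what still yields $0^\#$. I expect weak compactness in $L$ to be the honest lower bound the argument delivers, even though the tight threshold isolated by this construction is the existence of $0^\#$.
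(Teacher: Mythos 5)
Your proposal does not work, for two separate reasons, and the paper's proof rests on a mechanism your plan is missing entirely. First, your choice of theory is too weak. With $T$ a fragment of $\ZF^-$ plus $V=L$, the unique candidate model of height $\omega_1^V$ is indeed $L_{\omega_1^V}$; but under the paper's notion of maximality (see the footnote: the only \emph{trivial} embedding is $id \colon M \to M$), a witness to non-maximality need not have a critical point at all --- the inclusion map $L_{\omega_1^V} \prec L_\gamma$ into a taller level of $L$ counts. Such inclusions can exist while $\omega_1^V$ is far from weakly compact in $L$. Indeed, suppose $\lambda$ is Mahlo in $L$ and work in $L$: the set $C = \{\alpha < \lambda \mid L_\alpha \prec L_\lambda\}$ contains a club by the usual Skolem-hull-and-condensation argument, and if $\kappa$ is the least inaccessible limit point of $C$, then $L_\kappa \prec L_\lambda$ while $\kappa$ is not Mahlo (Mahloness of $\kappa$ would yield a smaller inaccessible limit point of $C$), hence not weakly compact, in $L$. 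After forcing with $\Col(\omega, <\kappa)$ we have $\omega_1^V = \kappa$, and the inclusion $L_{\omega_1^V} \prec L_\lambda$ (which is absolute, so survives into $V$ and all its extensions) witnesses that $L_{\omega_1^V}$ is not even maximal in $V$, so $\omega_1 \notin \mathfrak{Sp}^*_T$ for your $T$; yet $\omega_1$ is not weakly compact in $L$. So the implication you want is consistently false for your theory, and your structural analysis beginning ``for $\kappa = \crit j$'' tacitly assumes every nontrivial embedding has a critical point, which is exactly what fails.

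Second, the plan of bootstrapping to $0^{\#}$ (or to an inner model with a measurable via Dodd--Jensen covering) is provably impossible, not merely delicate. By the paper's Lemma \ref{lemma:omega_1-not-in-Sp*}, Levy collapsing a weakly compact cardinal of $L$ over $L$ produces a model in which $\omega_1 \notin \mathfrak{Sp}^*_T$ for \emph{every} theory $T$; that model is a set-generic extension of $L$, so $0^{\#}$ does not exist there and there is no inner model with a measurable cardinal. Hence no correct argument can extract such strength from the hypothesis. Concretely, the $L$-ultrafilter $U$ you derive lives only in the generic extension; its countable completeness and iterability fail because the relevant decreasing sequences of functions need not lie in $L$, and the covering argument of Theorem \ref{thm:S-subset-of-Sp-max} requires the domain model to be enriched (coding $K_\gamma$, correct cofinalities and Mostowski collapses), which a bare $L_{\omega_1^V}$ is not. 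The paper's actual proof avoids measures altogether: its theory $T$ says that $A$ codes a binary tree all of whose nodes lie in $L$, together with surjections from $\omega$ onto every infinite ordinal. The surjections force any elementary $j$ of such a model $M$ into a transitive $N$ to fix every ordinal of $M$, and since every element of $M$ is definable from ordinal parameters, $N$ must be strictly taller than $M$. Applying the hypothesis $\omega_1 \notin \mathfrak{Sp}^*_T$ with $A$ coding an arbitrary $\omega_1^V$-tree $S$ of $L$, the node of $S^N$ on level $\omega_1^V$ lies in $L^N \subseteq L$ and yields a cofinal branch of $S$ in $L$; quantifying over all such $S$ gives the tree property for $\omega_1^V$ in $L$, hence (by $\GCH$ in $L$, which provides Aronszajn trees at successor cardinals) weak compactness. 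Note finally that the countability must be supplied by the predicate $A$: a theory asserting countability of $L$'s own ordinals, or none at all as in your degenerate $A = \emptyset$, either fails to have a model at height $\omega_1^V$ when $\omega_1^V > \omega_1^L$ (vacuous non-membership, no strength) or admits the inclusion witnesses above.
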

\begin{proof}
Let $T$ be the theory containing the statement: $A$ codes a binary tree, $\mathcal T$, of height the ordinals and for every $\eta \in \mathcal T$, $\eta$ is in $L$. Moreover for every infinite ordinal $\gamma$, $A$ codes a surjection from $\omega$ onto $\gamma$. Let us assume that there is no $M$ transitive such that $M \models T$, $M \cap \Ord = \omega_1$ and $M$ is generically maximal. Pick a binary tree $S$ of height $\omega_1^V$ in $L$, such that all its levels are of cardinality $<\omega_1^V$ in $L$, and take $A$ to code $S$ as well as surjections from $\omega$ to each infinite ordinal. As the model $M = L_{\omega_1^V}[A]$ satisfies $T$, it cannot be generically maximal. 

So, in some generic extension there is a transitive model $N$ such that $\langle L_{\omega_1^V}, \in, S\rangle \prec N$. Indeed, any elementary embedding $j \colon M \to N$ cannot move any ordinal in $M$ (as they are countable), so as $j$ is non trivial, $N \neq M$. By the same arguments as before, we conclude that $N \cap \Ord > M \cap \Ord = \omega_1^V$.
Moreover, $S^N$ is a tree of height $N \cap \Ord$. There is $\eta \in N$ such that $\eta$ is of level $\omega_1^V$. By elementarity, $S^N \restriction \omega_1^V = S$ and in particular, $\eta$ is a cofinal branch, and belongs to $L^N$. As $N$ is transitive, it belongs to $L$. 

So, every $\omega_1^V$-tree in $L$ has a branch in $L$, and thus it is weakly compact in $L$.  
\end{proof}

\begin{lemma}\label{lemma:omega_1-not-in-Sp*}
Let $\kappa$ be weakly compact. Then, in the generic extension by the Levy collapse $\Col(\omega,<\kappa)$, $\kappa=\omega_1\notin \mathfrak{Sp}^*$.
\end{lemma}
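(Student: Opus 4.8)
The plan is to work in $V[G]$, where $G$ is $\Col(\omega,<\kappa)$-generic (so $\kappa=\omega_1^{V[G]}$), fix an \emph{arbitrary} theory $T$ and an arbitrary transitive $M\models T$ with $M\cap\Ord=\kappa$, and produce a \emph{single} further generic extension of $V[G]$ in which $M$ has a proper transitive elementary end-extension; this already refutes generic maximality of $M$. Since $T$ contains $V=L[A]$, we have $M=L_\kappa[A]$ for a predicate $A$, and after recoding along a definable bijection we may assume $A\subseteq\kappa$. As $M$ ranges over all transitive models of height $\kappa$ of all theories $T$, establishing this for each such $M$ shows $\kappa\notin\mathfrak{Sp}^*$ in $V[G]$.

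The embedding will be manufactured in $V$ from the weak compactness of $\kappa$ and then lifted through the collapse. Working in $V$, let $\dot A$ be a name for $A$; since $\mathbb{P}=\Col(\omega,<\kappa)$ is $\kappa$-cc, the membership of each $\alpha<\kappa$ in $A$ is decided by an antichain of size $<\kappa$, so we may take $\dot A$ to be a nice name coded by a subset of $\kappa$, in particular $\dot A\in V_{\kappa+1}$. Using weak compactness, choose a $\kappa$-model $\bar M$ (transitive, of size $\kappa$, closed under $<\kappa$-sequences, modelling a sufficient fragment of $\ZFC$) with $\kappa\in\bar M$ and $\dot A,\mathbb{P}\in\bar M$, together with an elementary embedding $j\colon\bar M\to\bar N$ with $\crit j=\kappa$ and $\bar N$ transitive.

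Next I would lift $j$ to domain $\bar M[G]$. Here $j(\mathbb{P})=\Col(\omega,<j(\kappa))^{\bar N}$ factors as $\mathbb{P}\times\Col(\omega,[\kappa,j(\kappa)))^{\bar N}$, and $j\restriction\mathbb{P}$ is the identity since every condition is a finite partial function fixed by $j$; thus $G$ is generic for the initial factor over $\bar N$. Because we are permitted to pass to a further generic extension, force over $V[G]$ with the tail $\Col(\omega,[\kappa,j(\kappa)))^{\bar N}$ to obtain $H$; then $G\ast H$ is $j(\mathbb{P})$-generic over $\bar N$, and the standard lifting lemma yields an elementary $j\colon\bar M[G]\to\bar N[G\ast H]$ in $V[G][H]$ extending the original embedding.

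Finally I would read off the witness. Since $\crit j=\kappa$ and $A\subseteq\kappa$, we get $j(A)\cap\kappa=A$, so $L_\kappa[j(A)]=L_\kappa[A]=M$ and hence $j(M)=L_{j(\kappa)}[j(A)]$ is a transitive end-extension of $M$ of height $j(\kappa)>\kappa$; in particular $j(M)\neq M$. As every element of $M$ has rank $<\kappa$ and $\crit j=\kappa$, the embedding $j$ fixes $M$ pointwise, so $j\restriction M$ is simply the inclusion $M\hookrightarrow j(M)$; being a restriction of an elementary embedding of the ambient models, it is a non-trivial elementary embedding of $M$ into the transitive model $N:=j(M)$. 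Thus $M$ is not maximal in $V[G][H]$, which completes the argument. The main obstacle is the lifting step: one must verify that the tail generic $H$ can be produced (which is exactly where the freedom to take a further generic extension is used) and that the lifted map restricts to the identity inclusion of $M$ into the strictly taller $j(M)$ --- it is this coincidence of $\crit j$ with $M\cap\Ord$ that converts the internal embedding $j$ into the required non-trivial elementary embedding out of $M$.
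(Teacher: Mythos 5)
Your proof is correct and takes essentially the same route as the paper: a ground-model weakly compact embedding applied to a structure containing a (nice) name for the model, lifted through the tail collapse $\Col(\omega,[\kappa,j(\kappa)))$ via Silver's criterion, with $j\restriction M$ into the taller transitive $j(M)$ witnessing the failure of generic maximality. The one imprecise step is the claim that a \emph{definable} bijection lets you assume $A\subseteq\kappa$ --- a model satisfying only $V=L[A]$ (with no replacement) need not admit such a bijection --- but this is harmless and inessential: since $M=L_\kappa[A]\subseteq V_\kappa$ and hence $A\subseteq V_\kappa$, you can take $\dot A$ to be a nice name for the pair $(M,A)$ as a subset of $V_\kappa$ and run the identical argument with ``rank $<\kappa$'' in place of ``ordinal $<\kappa$'', since $j$ still fixes $V_\kappa^{\bar M[G]}$ pointwise.
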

\begin{proof}
Let $T$ be a theory. As $T$ is countable, it is introduced by an initial segment of the collapse so we may assume without loss of generality that $T$ belongs to the ground model. Let $M$ be a model of $T$ of height $\kappa$ in the generic extension. Let $\dot{M}$ be a name. 

Let $j \colon \langle V_\kappa, \in, \dot{M}\rangle \to N$ be a weakly compact embedding. Then, $j$ lifts to the generic extension in the further extension by $\Col(\omega, [\kappa, <j(\kappa)))$. So, this further extension adds a model $j(\dot{M})$ which satisfies $T$ and $j\restriction M$ witnesses the lack of generic maximality of $M$.
\end{proof}
Note that if $\kappa$ is the least weakly compact in $L$, then $(\kappa^{+})^L$ belongs to $S_u^*$, and in particular to $S^*$. We can take $T$ to be the theory: $V = L$, there is a maximal cardinal and it is the unique weakly compact cardinal. For the same $T$, it is also the maximal element of $\mathfrak{Sp}^*_T$. Similar argument works for other local large cardinals such as ineffable cardinals, so it is suggestive to suspect that the consistency strength of making \emph{all} uncountable ordinals to avoid $S^*$ will be related to a non-local type of large cardinal. 

Recall that definition of virtually strong cardinal, due to Schindler, \cite{Schindler2000}, using the more recent terminology of virtual large cardinal, \cite{GitmanSchindler2018}.
\begin{definition}
A cardinal $\kappa$ is virtually strong (remarkable) if for every $\gamma \geq \kappa$ and for every $\delta \geq \kappa$, there is a transitive model $M$ containing $V_\delta$ such that in the generic extension by $\Col(\omega, V_\gamma)$, there is an elementary embedding $j \colon V_\gamma \to M$ with critical point $\kappa$, and $j(\kappa) \geq \delta$. 
\end{definition}
Virtually strong cardinals are consistent with $V = L$, and strictly weaker than $\omega$-Erd\H{o}s cardinals. 
\begin{theorem}\label{thm:remarkable-cardinal-making-all-ordinals-evasive}
Let $\kappa$ be a virtually strong cardinal. Then in the generic extension by $\Col(\omega, <\kappa)$, $S^* = \mathfrak{Sp}^* = \omega_1$.
\end{theorem}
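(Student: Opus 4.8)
The plan is to show that in $V[G]$, where $G \subseteq \Col(\omega,<\kappa)$ is generic, no ordinal $\alpha \geq \kappa$ lies in $S^*$ or in $\mathfrak{Sp}^*$. Since $\kappa = \omega_1^{V[G]}$ and we already know $\omega_1 \subseteq S^* \cap \mathfrak{Sp}^*$, this yields $S^* = \mathfrak{Sp}^* = \omega_1$. So I fix $\alpha \geq \kappa$, a theory $T$, and a transitive $M \models T$ with $M\cap\Ord = \alpha$; as $T$ contains $V = L[A]$, we have $M = L_\alpha[A]$ for $A = A^M$, and I fix a name $\dot A$. It suffices to produce, in some generic extension of $V[G]$, a non-trivial elementary $j^+\restriction M \colon M \to N$ into a transitive $N$: this refutes $\alpha \in \mathfrak{Sp}^*_T$, and since the resulting $N \equiv M \models T$ will be taller than $M$, it simultaneously refutes $\alpha \in S^*$ (witnessed by $T$ and $M$). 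Thus the two claims are handled by a single construction.

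The idea is to feed a name for $M$ into a virtually strong embedding. Fix a cardinal $\gamma > \alpha$ large enough that $\dot A, \Col(\omega,<\kappa) \in V_\gamma$, and set $\delta = \gamma$. By virtual strength there is a transitive set $M' \supseteq V_\delta$ such that $\Col(\omega, V_\gamma)$ forces the existence of an elementary $j \colon V_\gamma \to M'$ with $\crit j = \kappa$ and $j(\kappa) \geq \delta > \alpha$; by homogeneity of the collapse this holds in \emph{every} $\Col(\omega, V_\gamma)$-generic extension. Using the standard absorption of the Levy collapse --- $\Col(\omega,<\kappa) * \dot{\Col}(\omega, |V_\gamma|)$ is forcing equivalent to $\Col(\omega, V_\gamma)$, as $|\Col(\omega,<\kappa)| = \kappa \leq |V_\gamma|$ and both collapse $|V_\gamma|$ to $\omega$ --- I can force over $V[G]$ with the quotient to reach $V[H] = V[G][K]$, a $\Col(\omega, V_\gamma)$-generic extension of $V$ with $V[G] \subseteq V[H]$ and $G$ its lower part, in which the embedding $j \colon V_\gamma \to M'$ exists.

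Next I would lift $j$ across the collapse. Since $\crit j = \kappa$, every condition of $\Col(\omega,<\kappa)$ is fixed by $j$, and $j(\Col(\omega,<\kappa)) = \Col(\omega,<j(\kappa))^{M'}$ factors as $\Col(\omega,<\kappa) \times \Col(\omega,[\kappa, j(\kappa)))^{M'}$. Forcing over $V[H]$ with the ground-model top part $\Col(\omega,[\kappa, j(\kappa)))^{M'}$ produces a generic $G_1$ with $G \times G_1$ generic over $M'$ and $j\image G = G \subseteq G \times G_1$; the transfer lemma then gives an elementary $j^+ \colon V_\gamma[G] \to M'[G \times G_1]$ extending $j$, living in the generic extension $V[G][K * G_1]$ of $V[G]$. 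As $M = L_\alpha[A] \in V_\gamma[G]$, set $N = j^+(M) = (L_{j(\alpha)}[j^+(A)])^{M'[G\times G_1]}$, a transitive model with $N \equiv M$ of height $j(\alpha) \geq j(\kappa) > \alpha$. The restriction $j^+\restriction M \colon M \to N$ is elementary, and it is non-trivial: if $\alpha > \kappa$ it sends $\kappa$ to $j(\kappa) > \alpha$, while if $\alpha = \kappa$ its codomain $N$ is strictly taller, so $N \neq M$ already suffices. This is the desired embedding.

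The main obstacle I anticipate is the bookkeeping around the two forcing steps: checking that the virtually strong embedding can genuinely be arranged to live over the \emph{fixed} $V[G]$ (the absorption together with the homogeneity of $\Col(\omega, V_\gamma)$), and that the lift through $\Col(\omega,<j(\kappa))^{M'}$ goes through with $M'$ merely a transitive \emph{set}, so that its top collapse is honest set forcing and a generic $G_1$ is available in a further extension. A secondary point to handle with care is the degenerate case $\alpha = \kappa = \omega_1^{V[G]}$, where $j^+$ may fix $M$ pointwise and non-triviality must be extracted solely from $N$ being a taller model --- precisely the phenomenon already isolated in Lemma \ref{lemma:omega_1-not-in-Sp*}.
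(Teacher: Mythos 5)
Your proposal is correct and follows essentially the same route as the paper: feed (a name for) the model into the virtually strong embedding and conclude via Silver's lifting criterion, the key point in both arguments being that $\crit j = \kappa$ fixes every condition of $\Col(\omega,<\kappa)$, so that $j\image G \subseteq G\times G_1$ and the lifted map restricts to a non-trivial elementary embedding of $M$ into a taller transitive model, refuting membership in $S^*$ and in $\mathfrak{Sp}^*_T$ simultaneously. Your explicit absorption step (that $\Col(\omega,<\kappa) * \dot{\Col}(\omega,|V_\gamma|)$ is equivalent to $\Col(\omega,V_\gamma)$), which places the virtual embedding in an honest generic extension of $V[G]$, together with your treatment of the degenerate case $\alpha=\kappa$, is a careful spelling-out of points the paper leaves implicit rather than a different argument.
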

\begin{proof}
Let $\dot{T}$ be a name for a theory in the generic extension. By the chain condition of the forcing, there is a bounded initial segment of the forcing adding it, so without loss of generality, we may assume that it is from the ground model.

Let $\dot{M}$ be a name for a model of $T$ of height $\alpha \geq \kappa$. By the definition of virtually strong cardinal, in the generic extension by $\Col(\omega, V_{\alpha})$ there is an elementary embedding $j \colon V_{\alpha} \to N$, for some $N \in V$, with critical point $\kappa$ and $j(\kappa) \geq \alpha + 1$. In particular, $j(\dot{M})$ is an $\Col(\omega, <j(\kappa))^N$-name for a transitive model of $T$, of height $j(\alpha) > \alpha$. This shows that $T$ cannot witness $\alpha \in S^*$, as the forcing $\Col(\omega,[\kappa,  <j(\kappa))$ introduces a transitive model of $T$ of height $> \alpha$. 

In order to show that $\alpha \notin \mathfrak{Sp}^*_T$ we must show that in the generic extension $j\image M \prec j(M)$. This follows from Silver's elementary embedding lifting theorem, \cite{CummingsHandbook}. Indeed, for every $\bar a \in M^{<\omega}$ and formula $\varphi$, if $p \Vdash \dot M \models \varphi(\bar a)$ then by elementarity $j(p) \Vdash j(\dot{M}) \models \varphi (j(\bar a))$, and as $j(p) = p$, the result follows.
\end{proof}
\begin{conjecture}
The consistency strength of the $\mathfrak{Sp}^* = \omega_1$ is exactly a virtually strong cardinal.
\end{conjecture}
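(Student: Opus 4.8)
The plan is to prove the conjectured equiconsistency by treating the two directions separately. One direction is already available: Theorem~\ref{thm:remarkable-cardinal-making-all-ordinals-evasive} shows that if $\kappa$ is virtually strong, then $\Col(\omega,<\kappa)$ forces $\mathfrak{Sp}^* = \omega_1$, so $\mathrm{Con}(\text{virtually strong}) \Rightarrow \mathrm{Con}(\mathfrak{Sp}^* = \omega_1)$. All the content lies in the lower bound, and the guiding idea is that membership in $\mathfrak{Sp}^*$ concerns \emph{generic} elementary embeddings of a fixed transitive model, and generic embeddings are exactly the objects witnessing \emph{virtual} large-cardinal properties. This explains why the matching notion here is virtual strongness rather than an actual large cardinal: it is the systematic generic counterpart of the argument in Theorem~\ref{thm:S-subset-of-Sp-max}, where ground-model embeddings were used to produce an actual measurable via the Dodd--Jensen $K$. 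Accordingly I would argue the lower bound by contraposition: assuming there is no inner model with a virtually strong cardinal, I would exhibit an uncountable $\alpha \in \mathfrak{Sp}^*$, contradicting $\mathfrak{Sp}^* = \omega_1$.

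Since virtual strongness sits below a measurable in consistency strength, the hypothesis ``no inner model with a virtually strong cardinal'' entails ``no inner model with a measurable'', so the same $K = K_{DJ}$ and Dodd--Jensen covering lemma used in Theorem~\ref{thm:S-subset-of-Sp-max} are at our disposal, and this $K$ has no virtually strong cardinal. For a suitable height $\alpha$ (a cardinal of $K$), I would let $M$ be a model of the form $L_\alpha[A]$ in which $A$ codes $K_\alpha$ together with the cofinality, cardinality, and Mostowski-collapse data of Lemma~\ref{lemma:B-can-code-cofinality}, and let $T'$ assert that this coding is correct. By $\mathfrak{Sp}^* = \omega_1$, the model $M$ is generically non-maximal, so in some generic extension there is a nontrivial $j \colon M \to N$ with $N$ transitive. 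As in the lemmas following Theorem~\ref{thm:S-subset-of-Sp-max}, the coding forces $j$ to have a critical point $\kappa = \crit j$, which is a cardinal of $K$, and $j$ restricts to a generic elementary embedding of $K_\alpha$. Running the covering argument of Theorem~\ref{thm:S-subset-of-Sp-max} inside the generic extension should show that the induced $K$-ultrapower is well founded there, yielding a genuine generic embedding of $K$-initial segments with critical point $\kappa$.

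The genuinely hard step is to upgrade such embeddings to the \emph{full} virtual strongness of a single $\kappa$, whose definition quantifies over all $\gamma,\delta$ and demands targets containing $V_\delta$ with $j(\kappa) \geq \delta$. Here I would use that $\mathfrak{Sp}^* = \omega_1$ supplies generic non-maximality \emph{simultaneously at every height}: for cofinally many $\delta$ the corresponding $K$-coding model is generically non-maximal, producing generic embeddings with arbitrarily large image. The task is then to uniformise the critical point across these heights --- by a pressing-down argument on the critical points $\kappa_\delta$, or by reflecting a fixed $\kappa$ --- and to transfer the embeddings from $K_\gamma$-segments to $V_\gamma$-segments using weak covering, thereby verifying the $j(\kappa) \geq \delta$ and $V_\delta \subseteq \range$ clauses. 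This bootstrapping is the main obstacle, and the delicate point within it is the well-foundedness of the generic $K$-ultrapower: unlike the actual case of Theorem~\ref{thm:S-subset-of-Sp-max}, the ultrapower need only be well founded in $V[G]$, and controlling this through covering in the collapse extension (where bounded subsets are unchanged but cardinalities are not) is exactly where a fully developed core-model theory at the virtually strong level is needed, and is why the statement remains a conjecture. A secondary but necessary subtlety is ensuring the critical point lies genuinely below $\alpha$ --- that is, ruling out ``top-extending'' embeddings with no critical point --- which is where the standing assumption $V = L[A]$ and the explicit coding of Lemma~\ref{lemma:B-can-code-cofinality} are essential.
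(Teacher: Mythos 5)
This statement is a conjecture in the paper: the authors prove only the upper bound (Theorem~\ref{thm:remarkable-cardinal-making-all-ordinals-evasive}, which you cite correctly for the direction $\mathrm{Con}(\text{virtually strong}) \Rightarrow \mathrm{Con}(\mathfrak{Sp}^* = \omega_1)$) and explicitly leave the lower bound open. Your proposal does not close it either: what you offer for the lower bound is a research plan whose decisive steps are flagged by you as unsolved, so it is not a proof. The gaps you name are in fact the essential ones, and there is at least one more you do not name. First, the quantifier mismatch: $\mathfrak{Sp}^* = \omega_1$ gives, for each height $\alpha$ and each candidate coded model $M$, \emph{some} generic extension containing \emph{some} nontrivial $j \colon M \to N$, with no coherence between the critical points, the forcings, or the embeddings at different heights; virtual strongness of a single $\kappa$ demands embeddings for \emph{all} $\gamma, \delta$ with $\crit j = \kappa$ and $j(\kappa) \geq \delta$. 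Your ``pressing-down/uniformisation'' step is precisely the conjecture restated, not a reduction of it. Second, and more fundamentally, in the definition of virtually strong the target model $M \supseteq V_\delta$ must be a set \emph{in $V$}, with only the embedding living in the collapse extension; the generic non-maximality of a model of height $\alpha$ only produces a target $N$ that is transitive \emph{in the generic extension}, and nothing in your sketch returns the target to $V$ (or to the intended inner model). Third, as you say, the well-foundedness of the induced generic ultrapower is only needed in $V[G]$, where the covering argument of Theorem~\ref{thm:S-subset-of-Sp-max} does not transfer, since the covering lemma is not preserved in the relevant form after collapsing cardinals.

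Two further remarks on the framing. Your claim that ``no inner model with a virtually strong cardinal'' entails ``no inner model with a measurable'' is correct but needs an argument: an inner model with a measurable has an $\omega$-Erd\H{o}s cardinal inside it, and Erd\H{o}s cardinals relativize to $L$, where they yield virtually strong (remarkable) cardinals. More to the point, once this is granted, the detour through $K_{DJ}$ is probably the wrong vehicle: virtual large cardinal properties are downward absolute to $L$, so the natural form of the lower bound is ``$\mathfrak{Sp}^* = \omega_1$ implies $L$ has a virtually strong cardinal''; under its negation one also gets $\neg 0^{\#}$ (Silver indiscernibles are remarkable in $L$), so Jensen covering for $L$ itself is available, and the coding models should be built over $L$-segments rather than $K$-segments. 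But whichever core model one works over, the three gaps above remain, and they are exactly why the statement stands as a conjecture rather than a theorem.
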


As the following theorem shows, the strength of trivializing $S^*$ by itself is quite low.
\begin{theorem}
Let us assume that $\Ord$ is Mahlo and that Global Choice holds. Then, there is an inaccessible cardinal $\kappa$ such that $\Col(\omega, <\kappa)$ forces that $S^* = \omega_1$.
\end{theorem}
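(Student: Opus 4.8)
The plan is to pick $\kappa$ to be an inaccessible cardinal that reflects $V$ as strongly as possible, collapse everything below it, and then argue that any theory possessing a transitive model reaching the new $\omega_1$ must in fact have transitive models of unboundedly many heights in generic extensions, so that nothing $\geq\kappa$ can be pinned down. Using Global Choice, fix a definable well-ordering $<_G$ of $V$; then $C=\{\delta:\langle V_\delta,\in,<_G\restriction V_\delta\rangle\prec\langle V,\in,<_G\rangle\}$ is a closed unbounded class, and since $\Ord$ is Mahlo, $C\cap\Inacc$ is a stationary proper class. Fix $\kappa\in C\cap\Inacc$ and force with $\Col(\omega,<\kappa)$, so that in $V[G]$ we have $\kappa=\omega_1$ and $V_\kappa[G]=H_{\omega_1}^{V[G]}$. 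Since every countable ordinal already lies in $S^*$, it suffices to show that no $\alpha\geq\kappa$ lies in $S^*$.

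So suppose toward a contradiction that $\alpha\geq\kappa$ and that $T$ together with a transitive $M\models T$, $M\cap\Ord=\alpha$, witness $\alpha\in S^*$; by the $\kappa$-chain condition we may take $T\in V$. Inside $V[G]$, form an elementary hull of $M$ of size $\kappa$ and collapse it, obtaining a transitive model of $T$ of size $\kappa$ and height in $[\kappa,\kappa^+)$ — the point of passing to size $\kappa$, rather than insisting on height exactly $\kappa$ (which may be unattainable if $T$ names large ordinals), is that such a model is coded by a subset of $\kappa$, hence is a \emph{class-sized} well-founded model of $T$ from the standpoint of $V_\kappa[G]$. Writing $\mathcal{V}_\delta=\langle V_\delta,V_{\delta+1},\in\rangle$ — a model of Kelley--Morse whenever $\delta$ is inaccessible — the situation is then captured by a fixed sentence $\rho(T)$ asserting that $\Col(\omega,<\Ord)$ forces the existence of a class relation $E\subseteq\Ord\times\Ord$ whose ordinal part is cofinal in $\Ord$ and which, as witnessed by a satisfaction class, is a well-founded extensional model of $T$. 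The satisfaction class is what lets one speak of the possibly infinite $T$ holding in a class model, and it is supplied by Kelley--Morse. With this encoding, for inaccessible $\delta$ one has the equivalence $\delta\in D\iff\mathcal{V}_\delta\models\rho(T)$, where $D$ is the class of inaccessible $\delta$ for which $\Col(\omega,<\delta)$ forces a transitive model of $T$ of size $\delta$ and height $\geq\delta$; and by the hull-and-collapse step, $\kappa\in D$, i.e.\ $\mathcal{V}_\kappa\models\rho(T)$.

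Now I would reflect. The sentence $\rho(T)$ should be arranged to be $\Sigma^1_1$ over $\mathcal{V}_\delta$ (one block of class existentials over the definable, tame class-forcing relation), so that $\Sigma^1_1$-indescribability of the inaccessible $\kappa$ yields a club of $\delta<\kappa$ with $\mathcal{V}_\delta\models\rho(T)$. Consequently $V_\kappa$ satisfies ``$\{\delta:\mathcal{V}_\delta\models\rho(T)\}$ is unbounded in $\Ord$'', a first-order assertion with parameter $T\in V_\kappa$, and by $V_\kappa\prec V$ the same holds in $V$. I would then apply $\Ord$ Mahlo to extract \emph{inaccessible} $\delta$ with $\mathcal{V}_\delta\models\rho(T)$ cofinally in $\Ord$; fixing such a $\delta>\alpha$ gives $\delta\in D$, so $\Col(\omega,<\delta)$ forces a transitive model of $T$ of height $\geq\delta>\alpha$. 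Since $\Col(\omega,<\delta)=\Col(\omega,<\kappa)\ast\Col(\omega,[\kappa,\delta))$, this model lives in a generic extension of $V[G]$, contradicting $\alpha\in S^*$; hence $S^*=\omega_1$.

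The routine parts are the forcing bookkeeping (absorbing $T$, factoring the collapse) and the hull-and-collapse reduction. The main obstacle is twofold. First, one must make precise the equivalence $\delta\in D\iff\mathcal{V}_\delta\models\rho(T)$ while keeping $\rho(T)$ genuinely $\Sigma^1_1$: this requires verifying that the class forcing $\Col(\omega,<\Ord)$ has a forcing relation definable over $\mathcal{V}_\delta$, that the Kelley--Morse satisfaction class correctly matches the external (set) satisfaction of the collapsed model, and that the logical complexity does not creep above $\Sigma^1_1$ — for only then does plain inaccessibility, rather than weak compactness, suffice to reflect it. Second, and this is the genuinely delicate heart, the step that applies $\Ord$ Mahlo needs inaccessible witnesses \emph{on a club}, yet $\Ord$ Mahlo does not prove the existence of even a single Mahlo cardinal (e.g.\ $V_\lambda$ for $\lambda$ the least Mahlo models ``$\Ord$ is Mahlo'' with no Mahlo cardinal), so the inaccessibles below $\kappa$ need not be stationary and the $\Sigma^1_1$-reflection club may miss them. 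What transfers along $V_\kappa\prec V$ is only \emph{unboundedness} of the class of $\rho(T)$-points, not ``contains a club'', since no set is club in $\Ord^{V_\kappa}$. Closing this gap — manufacturing an honest club class of $\rho(T)$-points in $V$ so that Mahloness can deliver inaccessible witnesses — is where the argument must be most careful, and is precisely the feature that distinguishes this low-strength result from the virtually-strong case, where an actual generic elementary embedding did the lifting directly.
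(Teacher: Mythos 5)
Your argument does not close, and the hole is exactly the one you concede in your final paragraph; it is not a verification detail but a fatal obstruction. Your endgame needs an \emph{inaccessible} $\delta>\alpha$ with $\mathcal{V}_\delta\models\rho(T)$, because the equivalence ``$\delta\in D\iff\mathcal{V}_\delta\models\rho(T)$'' is only available at inaccessibles: there $V_\delta[G]=H_{\omega_1}^{V[G]}$ is closed under $\omega$-sequences and the classes of the extension are exactly the $V[G]$-subsets of $V_\delta[G]$, so well-foundedness can be rendered first-order (``no descending $\omega$-sequence that is a set'') and $\rho(T)$ can stay $\Sigma^1_1$; at singular $\delta$ the collapse makes $\delta$ countable, this rendering no longer certifies genuine well-foundedness, and the honest $\Pi^1_1$ formulation would destroy the very $\Sigma^1_1$ complexity that lets a merely inaccessible $\kappa$ reflect. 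Now, $\Sigma^1_1$-reflection at $\kappa$ gives a \emph{club} of $\rho(T)$-points below $\kappa$, but under ``$\Ord$ is Mahlo'' the inaccessibles below $\kappa$ are unbounded yet need not be stationary (stationarity would make $\kappa$ Mahlo, and $\Ord$-Mahloness does not yield a single Mahlo cardinal), so that club may contain no inaccessible at all; and what transfers along $V_\kappa\prec V$ is only the first-order statement that the $\rho(T)$-points are \emph{unbounded}, on which Mahloness is powerless, since a club class and an unbounded class can be disjoint. No choice of a single reflecting $\kappa$ repairs this: the missing step is the production of a \emph{club} class, in $V$, to which Mahloness can be applied.

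The paper's proof supplies precisely that club, by running the argument globally and in the contrapositive rather than reflecting below one fixed $\kappa$. Assume the conclusion fails at \emph{every} inaccessible $\kappa$; then each inaccessible $\kappa$ carries a name $\dot{T}_\kappa$ (which lies in $V_\kappa$ by the chain condition) and a name $\dot{\gamma}_\kappa$ forced into $S^*\setminus\omega_1$. The assignment $\kappa\mapsto\dot{T}_\kappa$ is regressive in the sense of Lemma \ref{lemma:weak-fodor-lemma}, and that lemma --- whose proof is the one and only place where $\Ord$-Mahloness and Global Choice enter: the diagonal class $D$ of points catching all earlier values is club, and Mahloness places an inaccessible inside it --- produces two inaccessibles $\mu<\mu'$ with $\dot{T}_\mu=\dot{T}_{\mu'}=\dot{T}$ and with $\dot\gamma_\mu$ forced below $\mu'\leq\dot\gamma_{\mu'}$. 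The factorization $\Col(\omega,<\mu')\cong\Col(\omega,<\mu)\ast\Col(\omega,[\mu,<\mu'))$ then gives, over the $\Col(\omega,<\mu)$-extension in which $T$ is supposed to witness $\gamma_\mu\in S^*$, a further generic extension containing a transitive model of $T$ of height $\gamma_{\mu'}>\gamma_\mu$, a contradiction. Note that this route needs no Kelley--Morse models, satisfaction classes, class-forcing relations, or indescribability: Mahloness is applied to a club manufactured from the global failure assumption, which is exactly the club your reflection-from-below strategy cannot manufacture.
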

\begin{proof}
Let us start with a general lemma regarded a variant of Fodor's Lemma for our case. As was shown by Gitman, Hamkins and Karagila, \cite{GitmanHamkinsKaragila}, Fodor's lemma for classes might fail in general. In our case we care about the following weak instance.
\begin{lemma}\label{lemma:weak-fodor-lemma}
Let us assume that $\Ord$ is Mahlo. Then, for every class club $C$ and class function $F \colon C\cap \Inacc \to V$ such that $F(\alpha) \in V_\alpha$, there are $\delta_0 < \delta_1$ in $C$ such that $F(\delta_0) = F(\delta_1)$.
\end{lemma} 
\begin{proof}
Pick such $C$ and such $F$. Let $D$ be the class of all $\delta \in C$ such that for every $x \in V_\delta$, if there is $\gamma \in C$ such that $F(\gamma) = x$ then there is such $\gamma$ below $\delta$. Take $\delta_1 \in D$ inaccessible. Then $F(\delta_1) = x = F(\delta_0)$ for some $\delta_0 < \delta_1$.
\end{proof}
  
Let us assume towards a contradiction that the conclusion of the theorem fails. Then, for every $\kappa < \delta$ regular there is a $\Col(\omega, < \kappa)$-name $\dot{T}_\kappa$ for a theory and a $\Col(\omega,<\kappa)$-name for an ordinal $\dot{\gamma}_\kappa$ such that $\Vdash_{\Col(\omega,<\kappa)} \dot{\gamma}_\kappa \in S^* \setminus \omega_1$. 

By the chain condition of the Levy collapse, we may assume that $\dot{T}_\kappa \in V_\kappa$. We may bound the ordinal $\dot{\gamma}_\kappa$ by taking the supremum of all its possible values. Let $C$ be the class club of cardinals $\delta$ such that for every $\delta' < \delta$, $\Vdash \dot{\gamma}_{\delta'} < \delta$. By Lemma \ref{lemma:weak-fodor-lemma}, there are $\mu < \mu'$ in $C$ such that $\dot{T}_{\mu} = \dot{T}_{\mu'} = \dot{T}$, and by the construction of $C$, it is forced by $\Col(\omega, <\mu')$ that $\dot{\gamma}_{\mu}^{G\restriction \mu} < \dot{\gamma}_{\mu'}^G$.

By forcing with an initial segment, we may assume that $\dot{T}$ is in the ground model. 

Let $\mu < \mu'$ in $S$. Then, in the generic extension by $\Col(\omega, <\mu)$ there is a transitive model of $T$ of height $\gamma_\mu$, $M$. In the further generic extension by $\Col(\omega, [\mu, <\mu'))$, we add an additional transitive model $M'$ of $T$ of height $\gamma_{\mu'} > \gamma_\mu$, thus contradicting the assumption that $T$ witnesses $\gamma_\mu \in S^*$.
\end{proof}
In order to prove a partial converse to the theorem, let us commence with a couple of lemmas.
\begin{lemma}\label{lemma:omega1-inaccessible-to-reals}
For every real $r$, $\omega_1^{L[r]} \in S^*_u$. Moreover, if $\omega_1 = \omega_1^{L[r]}$ then for every $\alpha \geq \omega_1$ there is a proper forcing notion that forces $\alpha \in S^*_u$ and adds a theory $T$ such that $\alpha \in \mathfrak{Sp}^*_{T,u}$.
\end{lemma}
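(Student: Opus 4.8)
The plan is to prove the two assertions separately, basing the second on the capping idea that drives the first.

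For the first assertion, set $\beta = \omega_1^{L[r]}$ and take $M = L_\beta[r]$, interpreting $A$ as $r$. First I would run the usual condensation argument inside $L[r]$: given a real $x \in L_\gamma[r]$, take a countable $X \prec L_\gamma[r]$ with $x, r \in X$ and $\omega \subseteq X$, and collapse; the collapse fixes $r$ and $x$, so $x$ appears below $\beta$. Hence $M$ contains a surjection $\omega \chang \gamma$ for every $\gamma < \beta$ and satisfies "$V = L[A]$" together with "every set is countable". The essential point is that $T = Th(M)$ \emph{pins $r$}: since $r \subseteq \omega$ and each statement "$n \in A$" is first order, any transitive $N \equiv M$ has $A^N = r$, and being a transitive model of $V = L[A]$ it is of the form $L_{\beta'}[r]$. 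Because $N$ models "every set is countable" we get $\beta' \leq \omega_1^{L[r]} = \beta$, and the requirement $N \cap \Ord \geq \beta$ forces $\beta' = \beta$ and $N = M$. As $\omega_1^{L[r]}$ and this whole computation are absolute to generic extensions, this gives $\beta \in S^*_u$.

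For the second assertion I would aim for a model $M = L_\alpha[A]$ whose only infinite cardinals are $\omega$ and $\omega_1$, with the exact height $\alpha$ pinned by the coded structure rather than by $\alpha$ being an $L[A]$-cardinal (which is impossible, e.g., for $\alpha = \omega_1 \cdot 2$). Reducing to the limit case as in Lemma~\ref{lemma:B-can-code-cofinality} (the finitely many trailing successor levels are handled separately there), I would take $T$ to assert: $V = L[A]$; $A \cap \omega = r$; $\omega_1$ is the largest cardinal and every set has cardinality $\leq \omega_1$; and $A$ codes an increasing continuous sequence $\langle \delta_i\rangle$ cofinal in $\Ord$, together with a surjection $\omega_1 \chang \delta_i$ at each $i$ and bijections to $|\gamma|$ for each $\gamma$ as in Lemma~\ref{lemma:B-can-code-cofinality}. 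The capping then has two ingredients. Maximality (no nontrivial $j \colon M \to N$) follows verbatim from the lemma after Lemma~\ref{lemma:B-can-code-cofinality}: such a $j$ would have weakly inaccessible critical point, but the only cardinals of $M$ are $\omega$ and $\omega_1$, neither weakly inaccessible. For height, once uniqueness gives $A^N = A$ for any transitive $N = L_{\beta'}[A^N] \models T$, the coded sequence of $N$ has supremum $\alpha$, so "cofinal in $\Ord$" forces $\beta' = \alpha$; thus $N = M$. All of this is generically absolute, so establishing uniqueness yields simultaneously $\alpha \in S^*_u$ and $\alpha \in \mathfrak{Sp}^*_{T,u}$.

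Uniqueness is where the \emph{coding} forcing enters: I would take $\mathbb{P}$ to be the $\omega_1$-analogue of Jensen's reshaping-plus-almost-disjoint coding, designed as a $\sigma$-closed reshaping (adding no reals, so $\omega_1^{L[r]} = \omega_1$ is preserved and $\omega_1^{L[A]} = \omega_1$) followed by a ccc almost-disjoint coding, hence proper. The reshaping guarantees that every initial segment $L[A \cap \gamma]$ already witnesses $|\gamma| \leq \omega_1$, which both makes $\omega_1$ the top cardinal of $M$ and arranges that $A$ is $\Sigma_1$-decodable over $L_\alpha[A]$ from $r$ via the canonical (say $<_{L[r]}$-least) almost-disjoint family. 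Given such decoding, any transitive $N \equiv M$ of height $\geq \alpha$ first recovers the same $r$ from $T$, then recovers $A^N = \{\xi : \psi(\xi,r)\}$ by the same parameter-free-modulo-$r$ formula $\psi$; by $\Sigma_1$-absoluteness this equals $A$, and the cofinal-sequence clause closes off the height.

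The hard part will be the forcing construction itself. Three requirements must hold at once for an \emph{arbitrary} $\alpha \geq \omega_1$: that $\mathbb{P}$ is proper (so $\omega_1$ and $\omega_1^{L[r]}$ survive), that the reshaping runs through a possibly singular, non-cardinal $\alpha$ while keeping the sequence coding faithful, and that the decoding formula $\psi(\cdot, r)$ is genuinely \emph{rigid} — i.e., in any taller transitive model of $T$ it reconstructs exactly $A$ and does not spuriously code ordinals above $\alpha$, so that the "cofinal in $\Ord$" statement really bites. Verifying that a single canonical $\Sigma_1(r)$ decoding achieves this uniformly, and that reshaping at $\omega_1$ can be arranged $\sigma$-closed for arbitrary $\alpha$, is the delicate technical core on which both the $S^*_u$ and the $\mathfrak{Sp}^*_{T,u}$ conclusions rest.
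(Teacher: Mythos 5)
Your first assertion is proved exactly as in the paper: pin $r$ into the theory, add ``every ordinal is countable'', and use condensation plus absoluteness of $\omega_1^{L[r]}$; that part is fine.

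The second assertion is where your proposal has a genuine gap, and it sits precisely at the point you yourself flag as ``the delicate technical core''. Two concrete problems. First, you never collapse $|\alpha|$ to $\omega_1$: almost disjoint coding by a real can only code a set of size $\aleph_1$, so before any coding can start one needs $\alpha$ to be coded by a relation $B\subseteq\omega_1\times\omega_1$. Your substitute --- a ``reshaping that runs through a possibly singular, non-cardinal $\alpha$'' --- is not the Jensen--Solovay reshaping at $\omega_1$ but essentially full Jensen coding of a subset of $\alpha$ down to a real, whose properness for arbitrary $\alpha$ is a serious theorem, not a remark. Second, the reshaping forcing is not $\sigma$-closed: for an increasing chain of conditions $p_n$ with domains $\gamma_n$, the union $p$ need not satisfy that $\gamma=\sup\gamma_n$ is countable in $L[r,p]$, since the witnessing sequence $\langle\gamma_n\rangle$ need not lie there; reshaping is only distributive, and proving even that requires hypotheses. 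So the forcing you describe is not known to be proper, and both conclusions of the lemma rest on it.

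The paper's route avoids all of this, and the hypothesis $\omega_1=\omega_1^{L[r]}$ is exactly what makes reshaping unnecessary. One first forces with the $\sigma$-closed collapse making $|\alpha|=\omega_1$, obtaining a relation $B$ on $\omega_1$ coding $\alpha$; then, since $\omega_1=\omega_1^{L[r]}$, there is an almost disjoint family of size $\omega_1$ that is \emph{definable in $L_{\omega_1}[r]$} (every $\gamma<\omega_1$ is already countable in $L_{\omega_1}[r]$, which is the whole point of the hypothesis), and one codes $B$ by a real $s$ with the ccc Jensen--Solovay forcing relative to that family. The two-step iteration ($\sigma$-closed followed by ccc) is proper. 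The theory then sets $A=(r,s)$ and asserts that every ordinal is the order type of the set of $B$-predecessors of some countable ordinal, where $B$ is decoded from $s$ via the $L_{\omega_1}[r]$-definable family; since this decoding is absolute and rigid (new members of the family in taller models have infinite intersection with $s$ by genericity), every transitive model of $T$, in any generic extension, has all its ordinals realized as $B$-ranks, hence has height at most $\alpha$, giving uniqueness, maximality, and generic absoluteness simultaneously. Your idea of pinning the height by a coded cofinal structure is in the same spirit, but without the collapse-then-code decomposition the construction you propose does not go through as stated.
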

\begin{proof}
For the first part, let $T$ be a theory that codes $A = r$ and states that every ordinal is countable. 

Let us deal with the second part. Let us first collapse $|\alpha|$ to be $\omega_1$ using a $\sigma$-closed forcing, and let $B$ be a relation on $\omega_1$ coding $\alpha$ as an ordinal. Then, let $\mathcal{A}$ be an almost disjoint family of subsets of $\omega$ in $L[r]$ of size $\omega_1$, definable in $L_{\omega_1}[r]$. Using the almost disjoint coding, let $s$ be a real coding $B$, \cite{JensenSolovay}.

Let $T$ be the theory stating that $A = (r, s)$ and that for every ordinal $\beta$ there is a countable ordinal $\gamma$ such that $\otp \{\delta \in \omega_1 \mid (\delta, \gamma) \in B\}$ is $\beta$, as ordered by $B$. Note that this makes sense, as the correspondence between the ordinals and the initial segments of $B$ is unique.
\end{proof}

\begin{corollary}
If $\omega_1$ is not inaccessible in $L$, then it is a member of $S^*_u$. 
\end{corollary}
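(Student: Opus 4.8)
The plan is to reduce the statement to the first part of Lemma \ref{lemma:omega1-inaccessible-to-reals}, which already gives $\omega_1^{L[r]} \in S^*_u$ for every real $r$. Thus it suffices to produce a single real $r$ with $\omega_1^{L[r]} = \omega_1$; the corollary then follows immediately.

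To find such a real, I would first unwind what it means for $\omega_1$ to fail to be inaccessible in $L$. Since $\omega_1$ is a cardinal of $V$, it is a cardinal of $L$, and since it is regular in $V$ it is regular in $L$; hence in $L$ regularity is automatic and being inaccessible is the same as being a limit cardinal. Therefore the hypothesis that $\omega_1$ is not inaccessible in $L$ forces $\omega_1$ to be a successor cardinal of $L$, say $\omega_1 = (\lambda^{+})^{L}$ for some infinite $L$-cardinal $\lambda < \omega_1$. As $\lambda < \omega_1$, it is countable in $V$, so I can fix a real $r$ coding a bijection $g \colon \omega \to \lambda$.

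It then remains to check that this $r$ works, i.e.\ that $\omega_1^{L[r]} = \omega_1$. The inequality $\omega_1^{L[r]} \leq \omega_1$ is automatic because $L[r] \subseteq V$ and $\omega_1$, being a cardinal of $V$, remains an uncountable cardinal in $L[r]$. For the reverse inequality I would show that every $\alpha < \omega_1$ is countable in $L[r]$: for $\lambda \leq \alpha < \omega_1 = (\lambda^{+})^{L}$ there is, in $L$, a bijection $f_\alpha \colon \alpha \to \lambda$, and then $g^{-1}\circ f_\alpha \in L[r]$ witnesses that $\alpha$ is countable in $L[r]$; ordinals below $\lambda$ are countable a fortiori. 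Hence no uncountable $L[r]$-cardinal lies strictly below $\omega_1$, giving $\omega_1^{L[r]} = \omega_1$, as desired.

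Since this argument is essentially a routine collapsing computation, there is no real obstacle; the only point requiring care is the correct reading of the hypothesis, namely that $\omega_1$ failing to be inaccessible in $L$ is equivalent to its being a successor cardinal there (using that $\omega_1$ is automatically a regular cardinal of $L$). Once that is in place, the single real collapsing $\lambda$ to $\omega$ does all the work, and the degenerate case $\lambda = \omega$ (i.e.\ $\omega_1 = \omega_1^{L}$) is covered uniformly by taking $r$ trivial, so that $L[r] = L$.
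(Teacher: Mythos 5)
Your proof is correct and is exactly the intended deduction: the paper states this corollary without proof as an immediate consequence of Lemma \ref{lemma:omega1-inaccessible-to-reals}, and your argument (regularity of $\omega_1$ in $L$ forces it to be a successor cardinal $(\lambda^{+})^{L}$ there, a real coding a collapse of $\lambda$ gives $\omega_1^{L[r]} = \omega_1$, then apply the first part of the lemma) is the standard way to fill in that step.
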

\begin{lemma}
Let us assume that the set of inaccessible cardinal in $L_{\omega_1^V}$ is bounded. Then $\omega_1 \in S_u^*$. Moreover, if there is a club $C \subseteq \omega_1$ which is $\Pi_1$-definable in $L[r]$ for some $r \subseteq \omega_1$, and avoids all inaccessible cardinals, then $\omega_1 \in S_u^*$.
\end{lemma}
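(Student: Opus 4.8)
The plan is to reduce the first assertion to the second, and then to build, inside $V$, a single reshaped predicate that pins down $\omega_1^V$ robustly. For the reduction, if the inaccessible cardinals of $L$ below $\omega_1^V$ are bounded by some $\delta < \omega_1^V$, then the tail $C = \{\alpha : \delta < \alpha < \omega_1^V\}$ is a club in $\omega_1^V$ consisting entirely of ordinals that are not inaccessible in $L$, and it is $\Delta_0$ (hence $\Pi_1$) definable in $L[r]$ where $r$ codes $\delta$. Thus the first part is an instance of the second, and I would work with an arbitrary $\Pi_1$-definable (in $L[r]$, $r \subseteq \omega_1$) club $C$ avoiding all $L$-inaccessibles; without loss of generality $C$ consists of limit ordinals.

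The heart of the argument is a construction, carried out in $V$, of a set $R \subseteq \omega_1^V$ with $\omega_1^{L[R]} = \omega_1^V$ that is \emph{reshaped} and \emph{locally self-coding} along $C$. I would define $R$ by recursion on the increasing enumeration $\langle \gamma_\xi : \xi < \omega_1^V \rangle$ of $C$: on the block $[\gamma_\xi, \gamma_{\xi+1})$ I code, together with the relevant bits of $r$, the $<_L$-least witness that $\gamma_{\xi+1}$ fails to be inaccessible in $L$ --- an $L$-surjection from a smaller ordinal, or an $L$-collapse. Since every point of $C$ is non-inaccessible in $L$, such a bounded witness always exists, and interleaving these collapses guarantees that every $\eta < \omega_1^V$ is countable in $L[R \cap \eta]$; hence $\omega_1^{L[R]} \geq \omega_1^V$, and as the reverse inequality is automatic, $\omega_1^{L[R]} = \omega_1^V$. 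The design is arranged so that both $C$ and the recursion producing $R$ are computable from $A$ by a fixed formula with no external parameter (the parameter $r$ being decoded from $R$ itself), using that $\Pi_1$ formulas are downward absolute to transitive models.

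I would then set $M = L_{\omega_1^V}[R]$ and $T = \mathrm{Th}(M)$, where $T$ explicitly contains: $V = L[A]$, ``every ordinal is countable'', ``$A$ is reshaped'' (for every ordinal $\eta$, $\eta$ is countable in $L[A \cap \eta]$), and ``$A$ obeys the canonical recursion $\Phi$ along the club defined from $A$''. Note $M \cap \Ord = \omega_1^V$ and, by reshaping, $M$ does satisfy that every ordinal is countable. To verify $\omega_1^V \in S_u^*$, I take in an arbitrary generic extension $V[G]$ any transitive $N \equiv M$ with $N \cap \Ord \geq \omega_1^V$; writing $N = L_\beta[A^N]$, the axioms force $A^N$ to obey $\Phi$. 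By induction on the club points, using transitivity of $N$ (so that $L$ and bounded subsets are computed correctly) together with the downward absoluteness of the $\Pi_1$ definition of $C$, I show $A^N \cap \gamma = R \cap \gamma$ at every stage, hence $A^N \cap \omega_1^V = R$. If $N$ were taller, i.e.\ $\beta > \omega_1^V$, then applying ``$A$ is reshaped'' to $\eta = \omega_1^V \in N \cap \Ord$ would give a surjection from $\omega$ onto $\omega_1^V$ inside $L[R]$; but $L[R]$ is absolute between $V$ and $V[G]$ and $\omega_1^{L[R]} = \omega_1^V$, so no such surjection exists --- a contradiction. Therefore $\beta = \omega_1^V$ and $A^N = R$, whence $N = M$.

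The step I expect to be the main obstacle is securing the \emph{locality} of the coding: I need membership in $C$ and each step of the recursion $\Phi$ to depend only on $A \cap \alpha$ rather than on all of $A$, so that elementary equivalence together with transitivity can pin $A^N$ down level by level. This is precisely where the avoidance of $L$-inaccessibles is essential --- at an ordinal $\alpha$ inaccessible in $L$ the model $L_\alpha$ satisfies $\ZF$ and is too rich for a collapse of $\alpha$ to be read off from a bounded piece of $A$, so the recursion would stall; threading $C$ through non-inaccessibles guarantees a bounded $L$-witness at every step. The $\Pi_1$ character of $C$ enters to control $N$'s reconstruction of the club, since downward absoluteness prevents $N$ from seeing spurious club points, and one must also check that the interleaved collapses never inject a surjection $\omega \to \omega_1^V$ into $L[R]$, which holds because each coded collapse is bounded below $\omega_1^V$.
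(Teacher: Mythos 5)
Your proposal has a genuine gap, and it sits exactly at the step you flag as the ``main obstacle.'' The central claim --- that coding the $<_L$-least witnesses of non-inaccessibility along $C$ produces a reshaped $R$ with $\omega_1^{L[R]} = \omega_1^V$ --- is not just unjustified but provably false in the main case of the lemma. A witness that $\gamma$ fails to be inaccessible in $L$ is a singularizing map, a surjection from a smaller ordinal (when $\gamma$ is not an $L$-cardinal), or the collapse data onto the predecessor cardinal (when $\gamma = (\mu^+)^L$); none of these makes an $L$-cardinal countable, and no object of $L$ can, since $L$-cardinals are cardinals of $L$. Worse, every piece of data you code lies in $L[r]$ (the $L$-witnesses together with bits of $r$), so $L[R] \subseteq L[r]$ and hence $\omega_1^{L[R]} \leq \omega_1^{L[r]}$. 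Now, the hypothesis that $C$ avoids all inaccessibles is compatible with $\omega_1^V$ being inaccessible in $L[r]$ --- it only says the inaccessibles below $\omega_1^V$ are non-stationary there, i.e.\ that $\omega_1^V$ is not ``Mahlo'' in $L[r]$ --- and in that case $\omega_1^{L[r]} < \omega_1^V$, so $\omega_1^{L[R]} < \omega_1^V$ no matter how you run the recursion. That is precisely the case the lemma is really about: if instead $\omega_1^V$ fails to be inaccessible in $L[x]$ for some real $x$, the conclusion already follows from Lemma \ref{lemma:omega1-inaccessible-to-reals}. Repairing the reshaping by coding genuine collapses from $V$ (surjections $\omega \to \gamma$) destroys the other pillar of your argument: those collapses are not canonical, so the theory cannot force a taller model's predicate to agree with them, and the level-by-level identification $A^N \cap \gamma = R \cap \gamma$ breaks --- the same circularity that already afflicts the ``relevant bits of $r$'' in your recursion, since an arbitrary $r \subseteq \omega_1$ is not determined by any rule the theory can express.

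The paper's proof avoids this tension by never attempting to reshape. It first assumes without loss of generality that $\omega_1$ is inaccessible in $L[x]$ for every real $x$ (otherwise the previous lemma applies), and then takes $M = L_{\omega_1^V}[r]$ itself as the witness, with the theory asserting that the class defined by the $\Pi_1$ formula is a closed unbounded class containing no inaccessible cardinal. Given a taller transitive $N \equiv M$ in a generic extension, downward $\Pi_1$-absoluteness identifies $C^N$ below $\omega_1^V$ with $C$, so $C^N \cap \omega_1^V$ is unbounded in $\omega_1^V$; closedness of $C^N$ in $N$ then puts $\omega_1^V$ itself into $C^N$, whence $N$ believes $\omega_1^V$ is not inaccessible --- and this is the contradiction. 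In other words, the avoidance of inaccessibles is used as a $\Pi_1$-transferable property of the club points that gets reflected to the ordinal $\omega_1^V$ inside the taller model, not as a source of collapsing data for a coding construction. Your reduction of the first assertion to the ``moreover'' part matches the paper, and you correctly identified that controlling the predicate of $N$ is the crux, but the mechanism you propose for it cannot work.
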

\begin{proof}
Let us prove the moreover part, as it implies the first part. 

Let us assume, without loss of generality, that $\omega_1$ is inaccessible in the model $L[r]$ for every $r$.

Let $C$ be a club as in the statement of the lemma. Let us look at $M = L_{\omega_1}[r]$. The theory of $M$ includes the statement that no element of $C$ is an inaccessible and that $C$ is a club. Let $N$ be a transitive model in a generic extension which is elementary equivalent to $M$, and $N \cap \Ord > \omega_1^V$. Then, $N = L_\delta[r]$ for some $\delta$. As $C$ is $\Pi_1$-definable, $C^N \cap \omega_1^V = C$ and in particular, $\omega_1^V \in C^N$, which is a contradiction.   
\end{proof}
\begin{definition}
Let us say that $\kappa$ be $\alpha$-inaccessible if for every $\beta < \alpha$, the set of all $\beta$-inaccessible cardinals is unbounded at $\kappa$, and $\kappa$ is inaccessible.  
\end{definition}
Clearly, the set of all $\alpha$-inaccessible cardinals is $\Pi_1$-definable.

\begin{corollary}
The consistency strength of $\omega_1\notin S^*$ and $S^* = \omega_1$ is bounded from above by $\Ord$ is Mahlo. It is bounded from below by a cardinal $\kappa$ which is $\kappa$-inaccessible. 
\end{corollary}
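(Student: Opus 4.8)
For the upper bound I would appeal directly to the preceding theorem: under the hypotheses that $\Ord$ is Mahlo and that Global Choice holds, there is an inaccessible $\kappa$ for which $\Col(\omega,<\kappa)$ forces $S^* = \omega_1$, and in particular $\omega_1\notin S^*$. To convert this into the stated consistency-strength bound I would start from a model in which $\Ord$ is Mahlo — for instance $V_\lambda$ for $\lambda$ a Mahlo cardinal — and first force Global Choice. The standard class forcing adding a global well-order adds no new sets and is tame, hence preserves $\ZFC$ together with the Mahloness of $\Ord$ (no new class clubs or inaccessibles are introduced). In the resulting model the hypotheses of the theorem hold, and the ensuing Levy collapse produces a model of $S^* = \omega_1$. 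Thus $\mathrm{Con}(\ZFC + \Ord\text{ is Mahlo})$ implies $\mathrm{Con}(\ZFC + S^* = \omega_1)$, and since $S^* = \omega_1$ implies $\omega_1\notin S^*$, the same upper bound holds for both statements.

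For the lower bound I would show that $\omega_1\notin S^*$ forces $\kappa := \omega_1^V$ to be $\kappa$-inaccessible in $L$; this exhibits an inner model with a $\kappa$-inaccessible cardinal and gives the desired bound. The base case is already in hand: by the corollary stating that if $\omega_1$ is not inaccessible in $L$ then $\omega_1\in S^*_u\subseteq S^*$, we know that $\omega_1^V$ is inaccessible in $L$. The engine for the inductive step is a relativization of the ``$\Pi_1$-definable club'' lemma, replacing ``inaccessible'' by ``$\beta$-inaccessible'': if $\omega_1^V$ is $\beta$-inaccessible in $L$ and there is a club $C\subseteq\omega_1^V$, definable in $L$ with ordinal parameters, which avoids every $\beta$-inaccessible, then $\omega_1\in S^*_u$. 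The proof is verbatim the original one, using that $\beta$-inaccessibility is $\Pi_1$ (as recorded after the definition), hence downward absolute to the taller model $N = L_\delta$ obtained from a failure of membership in $S^*_u$; so $\omega_1^V$ remains $\beta$-inaccessible in $N$ while $\omega_1^V\in C^N$ and $C^N$ avoids $\beta$-inaccessibles, a contradiction.

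With this tool, suppose toward a contradiction that $\omega_1^V$ is not $\omega_1^V$-inaccessible in $L$, and let $\beta^*<\omega_1^V$ be least such that $\omega_1^V$ is not $\beta^*$-inaccessible in $L$. Since $\omega_1^V$ is inaccessible, unwinding the definition produces some $\beta_0<\beta^*$ for which the set of $\beta_0$-inaccessibles is bounded below $\omega_1^V$ in $L$, say by $\gamma$; by minimality of $\beta^*$, the ordinal $\omega_1^V$ is itself $\beta_0$-inaccessible in $L$. Then the tail $C=(\gamma,\omega_1^V)$ is a club, trivially definable from the parameter $\gamma$, which avoids all $\beta_0$-inaccessibles, so the relativized lemma yields $\omega_1\in S^*_u$, contradicting $\omega_1\notin S^*$. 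Hence $\omega_1^V$ is $\alpha$-inaccessible in $L$ for every $\alpha<\omega_1^V$, i.e.\ $\kappa$-inaccessible for $\kappa=\omega_1^V$, which completes the lower bound.

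I expect the main obstacle to lie in the lower bound, specifically in making the transfer of the club lemma to the predicate ``$\beta$-inaccessible'' fully rigorous: one must verify that the hyperinaccessibility hierarchy is genuinely $\Pi_1$ (so that the trace $C^N\cap\omega_1^V$ is computed correctly and $\omega_1^V$ retains its degree of inaccessibility in the taller model $N$), and that the least-counterexample analysis correctly isolates a single level $\beta_0$ at which the $\beta_0$-inaccessibles become bounded while $\omega_1^V$ still lies on that level. The upper bound, by contrast, is essentially a citation of the preceding theorem together with the routine conservativity of the Global Choice forcing over the theory ``$\Ord$ is Mahlo''.
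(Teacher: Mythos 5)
Your overall assembly is the intended one: the paper states this corollary without proof, and it is meant to be read off from the preceding material exactly as you do --- the upper bound by citing the Levy collapse theorem (with Global Choice first forced by the standard set-preserving class forcing), and the lower bound by a least-counterexample argument combining the club lemma, the definition of $\alpha$-inaccessibility, and the $\Pi_1$-definability remark. One cosmetic point on the upper bound: your parenthetical justification is not quite right, since the Global Choice extension certainly \emph{does} have new class clubs (any class definable from the generic well-order); what one actually verifies, by a density argument using the $\Ord$-closure of the forcing, is that every such club still contains an inaccessible, so that \emph{$\Ord$ is Mahlo} survives in the expanded class structure. (You also quietly use $S^*_u \subseteq S^*$, which is true but unstated in the paper: take $T = Th(M)$, a complete theory.)

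The substantive problem is in your relativized club lemma, at exactly the point you flagged but did not resolve. The taller model arising from a failure of $\omega_1 \in S^*_u$ is not $L_\delta$: for the theory to pin down the parameters $\beta_0$ and $\gamma$ across elementarily equivalent transitive models, you must code them by a real $r$ (they may be uncountable in $L$, so they cannot be recovered from a parameter-free theory of a pure $L$-level), and then the taller model is $N = L_\delta[r]$. But now the absoluteness you invoke points the wrong way: $N$ is not a transitive submodel of $L$, and downward absoluteness of $\beta_0$-inaccessibility transfers from $L[r]$ to $N$, not from $L$ to $N$. Your minimality analysis only yields that $\omega_1^V$ is $\beta_0$-inaccessible \emph{in $L$}, and adding a real can genuinely destroy inaccessibility of $L$-cardinals (for instance, $r = 0^{\#}$ makes the $L$-definable $L$-inaccessibles countable in $L[r]$), so the desired contradiction in $N$ does not follow as written. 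The fix is to run the least counterexample over all reals simultaneously: let $\beta^*$ be least such that $\omega_1^V$ fails to be $\beta^*$-inaccessible in $L[r]$ for \emph{some} real $r$. The base case $\beta^* > 0$ is precisely the ``without loss of generality'' step of the paper's club lemma: if $\omega_1^V$ is not inaccessible in some $L[r]$, it is a successor cardinal there, and coding $r$ together with its predecessor into a real $s$ gives $\omega_1^V = \omega_1^{L[s]}$, whence $\omega_1 \in S^*_u$ by Lemma \ref{lemma:omega1-inaccessible-to-reals}, contradicting $\omega_1 \notin S^*$. Minimality then gives that $\omega_1^V$ is $\beta_0$-inaccessible in $L[r]$ for \emph{every} real $r$, in particular for the real coding $(r^*, \beta_0, \gamma)$ used to build $M$, where $r^*$ witnesses the boundedness at level $\beta_0$; genuine downward absoluteness from $L[r]$ to $L_\delta[r]$ then closes the argument, and taking $r$ trivial recovers your conclusion that $\omega_1^V$ is $\omega_1^V$-inaccessible in $L$, which is the inner model needed for the lower bound.
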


\section{Erd\H{o}s cardinals}
In Lemma \ref{lemma:omega_1-not-in-Sp*}, we saw that the consistency strength of $\omega_1\notin \mathfrak{Sp}^*$ is exactly a weakly compact cardinal. The main theorem of this section is that certain large cardinal axioms outright imply that result. By standard arguments, the existence of a Woodin cardinal is sufficient, as one can force with Woodin's stationary tower. By replacing the stationary tower with a weaker ideal and analysing the ill foundedness of the obtained generic ultrapower, we obtain the following improvement.

\begin{theorem}\label{thm:erdos}
Let us assume that there is an $\omega_1$-Erd\H{o}s cardinal. Then, $\omega_1 \notin \mathfrak{Sp}^*$. 

More generally, if $\alpha$ is an uncountable ordinal and there is an $\alpha$-Erd\H{o}s cardinal, then $\alpha \notin \mathfrak{Sp}^*$.
\end{theorem}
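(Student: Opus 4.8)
The plan is to refute membership in $\mathfrak{Sp}^*$ straight from the definition: fix an arbitrary theory $T$ and a transitive model $M \models T$ with $M \cap \Ord = \alpha$, and produce a single generic extension in which $M$ fails to be maximal. Since $T$ contains $V = L[A]$ we have $M = L_\alpha[A]$ for some predicate $A$, and it suffices to exhibit, in some forcing extension, a transitive $N \neq M$ together with an elementary embedding $M \to N$; by the convention in the definition of $\mathfrak{Sp}_T$, any such embedding with $N \neq M$ is automatically non-trivial. As in the proof of Lemma \ref{lemma:omega_1-not-in-Sp*}, since $T$ is countable we may assume it lies in the ground model.

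The embedding will be the restriction of a generic ultrapower. Following the suggestion that the stationary tower be replaced by a weaker ideal, I would fix an $\alpha$-Erd\H{o}s cardinal $\kappa$ and force with a quotient $\power(Z)/I$ for an ideal $I$ built from $\kappa$ --- a ``short'' variant of the stationary tower living below $\kappa$, whose positive sets are exactly those carrying sets of order-indiscernibles of order type $\alpha$ for structures coded in $V_\kappa$. As with the stationary tower, the ideal is arranged so that the generic filter $U$ yields an elementary embedding $j \colon V \to N = \Ult(V, U)$ with $\crit j = \omega_1$ and $j(\omega_1) > \omega_1$ (moving, more generally, every ordinal of $M$ lying in $[\omega_1, \alpha)$). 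The essential difference from the Woodin case is that $N$ will in general be \emph{ill-founded}.

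The crucial step is the analysis of this ill-foundedness: the partition property $\kappa \to (\alpha)^{<\omega}$ should force the well-founded part of $N$ to reach past $j(\alpha)$. Concretely, suppose toward a contradiction that $\langle [f_n]_U : n < \omega\rangle$ were an $E$-descending sequence of ordinals below $j(\alpha)$. Each relation $[f_{n+1}]_U \mathbin{E} [f_n]_U$ is decided by a set in $U$, hence --- by the choice of $I$ --- by a set carrying indiscernibles of order type $\alpha$; amalgamating the finitary colorings read off from the $f_n$ into a single coloring of $[\kappa]^{<\omega}$, a homogeneous set of order type $\alpha$ would canonize all the $f_n$ simultaneously and thereby produce a genuine infinite descending sequence of ordinals below $\alpha$, which is absurd. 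It is here that the index $\alpha$ must match the height of $M$: the partition property of length $\alpha$ is exactly enough to secure well-foundedness up to $j(\alpha)$ --- and no further, which is all that $j \restriction M$ ever sees. I expect this well-foundedness analysis --- pinning down that the ill-foundedness of $N$ cannot begin below $j(\alpha)$, together with the verification that the generic ultrapower has critical point $\omega_1$ rather than $\kappa$ --- to be the main obstacle, and the reason the hypothesis is stated with the matching index.

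Granting this, $j(M) = L_{j(\alpha)}[j(A)]$ lies in the well-founded part of $N$, so its Mostowski collapse is a genuine transitive model $N_0$, and composing with the collapse gives an elementary $j \restriction M \colon M \to N_0$. Since $\crit j = \omega_1 \le \alpha$ we have $j(\alpha) > \alpha$ (because $j$ moves an ordinal of $M$ when $\alpha > \omega_1$, and raises the height when $\alpha = \omega_1$, where $M$ collapses to a proper initial segment of $N_0$); in either case $N_0 \neq M$. Thus in the extension $V[U]$ the model $M$ is not maximal, so $\alpha \notin \mathfrak{Sp}^*_T$, and as $T$ was arbitrary, $\alpha \notin \mathfrak{Sp}^*$. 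The treatment of a general uncountable $\alpha$ is identical, reading $\alpha$ throughout in place of the height $\omega_1$ of the main case.
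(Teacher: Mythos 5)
Your proposal follows the paper's high-level strategy (a generic ultrapower by an ideal weaker than the stationary tower, an analysis of where ill-foundedness can begin, then restricting $j$ to $M$ and collapsing), but the two load-bearing components are left unresolved, and the way you sketch them would not work. First, the ideal. Positive sets defined as ``those carrying sets of order-indiscernibles of order type $\alpha$'' describes an ideal on $\kappa$ itself (Baumgartner's Erd\H{o}s-type ideals). Any such one-dimensional, $\kappa$-complete normal ideal yields generic ultrapowers with critical point $\kappa$; but then, since $M \cap \Ord = \alpha < \kappa$, the restriction $j\restriction M$ is the identity and witnesses nothing. You correctly flag ``critical point $\omega_1$ rather than $\kappa$'' as the main obstacle, but you give no mechanism for achieving it, and no one-dimensional ideal below $\kappa$ can. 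The paper's resolution is structural: it works two-dimensionally, taking the underlying set to be $P_{\omega_2}(\kappa)$ (respectively $P_{|\alpha|^+}(\kappa)$) and forcing with stationary subsets of $E = \{x \subseteq \kappa \mid \otp x = \omega_1\}$ (respectively $\otp x = \alpha$); the Erd\H{o}s hypothesis is used exactly once, to see that $E$ is weakly stationary. Then $\sigma$-completeness of the club filter gives $\crit j \geq \omega_1$, while on $E$ the function $x \mapsto \otp(x \cap \kappa)$ agrees with the constant function $\omega_1$, which forces $j(\omega_1) = \kappa$ and hence $\crit j = \omega_1$ exactly.

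Second, the well-foundedness analysis. Your canonization sketch has a genuine gap: the descending sequence $\langle [f_n]_U \rangle$ and the sets in $U$ deciding each relation are objects of the generic extension, so to run a partition argument in $V$ you must first reflect to a single condition and ground-model functions; what you then obtain is a decreasing sequence of positive sets $S_n$ with $f_{n+1} < f_n$ pointwise on $S_{n+1}$, and producing a single point (or homogeneous set) lying in all of them simultaneously is precisely the standard obstruction to precipitousness --- homogeneity for ``colorings read off from the $f_n$'' does not by itself yield such a point. The paper avoids descending sequences altogether: by Fodor's lemma for weakly stationary sets (normality), every ordinal of the ultrapower below $j(\omega_1) = [x \mapsto \otp x]$ is of the form $[f_\rho]$ where $f_\rho(x) = \otp(x \cap \rho)$ for some $\rho < \kappa$, and $\rho \mapsto [f_\rho]$ is an order isomorphism; so the ultrapower's ordinals below $j(\omega_1)$ have order type exactly $\kappa$ and are well founded outright. (Note also that you only need well-foundedness strictly below $j(\alpha)$, not ``past'' it, since those are exactly the ordinals of $j(M)$.) Your final step --- collapsing $j(M)$ to a transitive $N_0$, composing, and observing $N_0 \neq M$ because its height $j(\alpha) = \kappa$ exceeds $\alpha$ --- does match the paper's conclusion once these two gaps are filled.
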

\begin{proof}
The generalization is proved in exactly the same way as the particular case, so in order to simplify the notations we will prove the case of $\alpha = \omega_1$.

Let $\kappa$ be an $\omega_1$-Erd\H{o}s cardinal. So, almost by the definition, it means that the set
\[E=\{x \subseteq \kappa \mid \otp x = \omega_1\}\]
is weakly stationary in $P_{\omega_2}(\kappa)$, see \cite[Section 3]{ForemanHandbook} for the definition. Let $\mathbb{E}$ be the forcing notion of all stationary subsets of $E$, ordered by inclusion. The forcing $\mathbb{E}$ produces a generic $V$-ultrafilter, $\mathcal{U}$. Let us consider the obtained elementary embedding $j \colon V \to M$. 

Let us first derive a couple of well known properties of this elementary embedding.
\begin{lemma}
$\crit j = \omega_1$.
\end{lemma}
\begin{proof}
Let $f \colon P_{\omega_2}(\kappa)\to \alpha$, for $\alpha < \omega_1$. Let $E'$ be a condition. By the $\sigma$-completeness of the club filter, there is $E'' \subseteq E'$ such that $f \restriction E''$ is constant, and in particular, $[f] = j(\gamma)$ for some $\gamma < \omega_1$. 
\end{proof}
\begin{lemma}
$j(\omega_1) = \kappa$. In particular, $M$ is well founded up to $j(\omega_1)$.
\end{lemma}
\begin{proof}
For every $\alpha \leq \kappa$, let $f_\alpha(x) = \otp(x \cap \alpha)$. We would like to show that $[f_\alpha] = \alpha$ for all $\alpha \leq \kappa$

Indeed, let $g$ be a function such that $[g] < [f_\kappa] = [c_{\omega_1}]$ and let $E'$ be a condition forcing that. Let us define $h(x) = \rho_x$ such that $\otp (x \cap \rho_x) = g(x)$. This is possible, as almost everywhere $g(x) < \omega_1$. The function $h$ is a regressive function on the stationary set, $E'$, and therefore there is $E'' \subseteq E'$ on which it is constant. Let $\rho$ be the fixed value that $h$ obtained. Then, $E'' \Vdash [f_\rho] = [g]$.

We conclude that every ordinal below $j(\omega_1)$ is of the form $[f_\alpha]$ for some $\alpha < \kappa$. Clearly, if $\alpha < \beta$ then $[f_\alpha] < [f_\beta]$ (as club many $x$ contains $\alpha$ as a member), so the lemma follows. 
\end{proof}
Even though the following lemma is not required for this argument, we include it here as it completes the picture of the structure of the generic ultrapower.
\begin{lemma}
$j\image \kappa \in M$. 
\end{lemma}
\begin{proof}
The identity function $id \colon P_{\omega_2} \kappa \to P_{\omega_2} \kappa$ represents $j\image \kappa$: Indeed, $j\image \kappa \subseteq [id]$ as every $\alpha < \kappa$ belongs to almost all $x \in P_{\omega_2}(\kappa)$. 

On the other hand, if $E' \Vdash [g] \in [id]$, then $g$ is regressive on $E$, and thus constant on some $E'' \subseteq E'$. 
\end{proof}

We are now ready for the proof of the theorem. Let $L_{\omega_1}[A]$ be a model of some theory $T$. Then, in the generic extension by $\mathbb E$, we can apply the elementary embedding and obtain $j\restriction L_{\omega_1}[A] \colon L_{\omega_1}[A] \to j(L_{\omega_1}[A])$ in the generic extension. Moreover, as $j(\omega_1)$ is well founded, $j(L_{\omega_1}[A])$ is a transitive model and $j(\omega_1) > \omega_1$, as wanted.
\end{proof}
\section{Evasive ordinals}
In the previous section, we discussed several instances of ordinals that belong to the classes $S, S^*, S^*_u$ and so on. As we showed, the consistency strength of making those classes as small as possible is below $0^{\#}$. In this section, we discuss a stronger form of avoiding those classes. 
\begin{definition}
Let $R\in \{S, S_u, S^*, S^*_u, \dots\}$. 

An ordinal $\alpha$ is $R$-evasive if for every set forcing notion $\mathbb{P}$, 
\[\Vdash_{\mathbb{P}} \check\alpha < \dot{\omega}_1 \vee \check\alpha \notin \dot{R}.\]
\end{definition}

\begin{lemma}\label{lemma:S-non-evasive}
Every cardinal is not $S$-evasive.
\end{lemma}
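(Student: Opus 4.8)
The plan is to exploit that $S$ is already provably rich: in every model of $\ZFC$ one has $\omega,\omega_1 \in S$ (every countable ordinal lies in $S^*_u \subseteq S$, and $\omega_1 \in S$ by the lemma computing it), and $S$ is closed under $\beta \mapsto \aleph_\beta$. To witness that an uncountable cardinal $\alpha$ is not $S$-evasive it is enough to produce a single forcing notion $\mathbb{P}$ with
\[
\Vdash_{\mathbb{P}} \check\alpha \geq \dot\omega_1 \wedge \check\alpha \in \dot S ,
\]
since this is precisely the failure of the defining condition of evasiveness for that $\mathbb{P}$. So I would collapse cardinals in order to push $\alpha$ onto one of these $\ZFC$-provable members of $S$ while keeping $\alpha \geq \omega_1$. (For the finite cardinals and $\omega$ the disjunct $\check\alpha < \dot\omega_1$ holds trivially, so the assertion is really about uncountable $\alpha$, which I now assume.)

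If $\alpha$ is regular I take $\mathbb{P} = \Col(\omega,<\alpha)$, which collapses every ordinal below $\alpha$ to be countable and turns $\alpha$ into the $\omega_1$ of the extension; as $\omega_1 \in S$ holds in every model of $\ZFC$, this gives $\Vdash_{\mathbb{P}} \check\alpha = \dot\omega_1 \in \dot S$, and in particular $\check\alpha \geq \dot\omega_1$.

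If $\alpha$ is singular I would instead make $\alpha$ into $\aleph_\omega$. First, if $\cf(\alpha) = \lambda > \omega$, force with $\Col(\omega,\lambda)$: this forcing has size $\lambda < \alpha$, hence is $\lambda^{+}$-c.c.\ and preserves every cardinal in $[\lambda^{+},\alpha]$ (so $\alpha$ stays a cardinal), while composing the generic surjection $\omega \to \lambda$ with a ground-model cofinal map $\lambda \to \alpha$ shows $\cf(\alpha) = \omega$ in the extension. So I may assume $\cf(\alpha) = \omega$. Now fix increasing cardinals $\langle \alpha_n : n < \omega\rangle$ cofinal in $\alpha$ with $\alpha_0 = \omega$ and each $\alpha_{n+1}$ a successor cardinal, and force with the full-support product $\prod_{n<\omega} \Col(\alpha_n,<\alpha_{n+1})$. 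The $n$-th factor collapses the cardinals of $[\alpha_n,\alpha_{n+1})$ to $\alpha_n$, making $\alpha_{n+1}$ the successor of $\alpha_n$; hence the surviving cardinals below $\alpha$ are exactly $\alpha_0 < \alpha_1 < \cdots$ in order type $\omega$, so $\alpha_n$ becomes $\aleph_n$ and $\alpha$ becomes $\aleph_\omega$. Since $\omega \in S$ and $S$ is closed under $\beta \mapsto \aleph_\beta$, this yields $\Vdash \check\alpha = \dot{\aleph}_\omega \in \dot S$ with $\check\alpha > \dot\omega_1$.

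The appeals to $\omega,\omega_1 \in S$ and to the $\aleph$-closure are immediate from the earlier lemmas, so the only point requiring care is the cardinal bookkeeping in the singular case: one must check that the product preserves $\alpha$ and realises it as exactly $\aleph_\omega$. This is the standard factoring analysis---writing the product as $\mathbb{P}_{<n} \times \mathbb{P}_{\geq n}$, the tail $\mathbb{P}_{\geq n}$ is $<\!\alpha_n$-closed while $\mathbb{P}_{<n}$ makes $\alpha_n$ a successor cardinal, so each $\alpha_n$ survives and $\alpha$, being their supremum, remains a limit cardinal equal to $\aleph_\omega$---so I expect the main obstacle here to be bookkeeping rather than anything essential.
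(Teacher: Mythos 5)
Your strategy is the paper's own (regular $\to$ $\aleph_1$, singular $\to$ $\aleph_\omega$), and your regular case is correct: $\Col(\omega,<\alpha)$ is $\alpha$-c.c.\ for regular $\alpha$, so $\alpha$ becomes the $\omega_1$ of the extension, and ``$\omega_1\in S$'' is a theorem of $\ZFC$. The gap is in the singular case, and it sits exactly in the step you wrote off as bookkeeping. A Levy collapse with uncountable closure collapses more than the interval it is aimed at: for regular uncountable $\mu$, $\Col(\mu,<\lambda)$ has $\Add(\mu,1)$ (the coordinate $\beta=2$) as a complete subforcing, and $\Add(\mu,1)$ collapses $2^{<\mu}$ to $\mu$, since every ground-model element of $2^{<\mu}$ appears as a block of the generic function. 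Hence your factor $\Col(\alpha_1,<\alpha_2)$ collapses $(2^{\aleph_0})^V$ to $\alpha_1$. Nothing in the lemma bounds the continuum: it is consistent that $\alpha$ is a singular cardinal and $2^{\aleph_0}>\alpha$ (add $\alpha^{+}$ Cohen reals first; this is c.c.c., so $\alpha$ stays a singular cardinal), and over any such model your product collapses $\alpha$ itself, so in the extension $\alpha$ is not even a cardinal, let alone $\aleph_\omega$. Consequently the assertions ``the $n$-th factor collapses the cardinals of $[\alpha_n,\alpha_{n+1})$ to $\alpha_n$, making $\alpha_{n+1}$ the successor of $\alpha_n$'' and ``$\mathbb{P}_{<n}$ makes $\alpha_n$ a successor cardinal'' are false in general. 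What your construction really requires is that $2^{\mu}<\alpha$ for all $\mu<\alpha$, i.e.\ that $\alpha$ is a strong limit; no re-choice of $\langle\alpha_n\rangle$ helps, because once a single $\mu<\alpha$ satisfies $2^{\mu}\ge\alpha$, every factor whose closure exceeds $\mu$ kills $\alpha$. Nor can you ``first force CH'': collapsing $(2^{\aleph_0})^V$ to something below $\alpha$ already collapses $\alpha$.

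A $\ZFC$ argument therefore has to proceed differently when $\alpha$ is not a strong limit, and it helps to notice that the target need not be exactly $\aleph_\omega$: it suffices to make $\alpha=\aleph_\beta$ for some $\beta$ that is countable in the extension, since countable ordinals lie in $S$ and $S$ is closed under $\beta\mapsto\aleph_\beta$. If $\alpha=\aleph_\xi$ with $\xi<\alpha$ (i.e.\ $\alpha$ is not an aleph fixed point), force with $\Col(\omega,|\xi|)$: this forcing has size $|\xi|<\alpha$, so it preserves all cardinals above $|\xi|$, and it makes $\xi$ countable; in the extension $\alpha=\aleph_\beta$ for a countable $\beta$, with no hypothesis on the continuum. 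The genuinely problematic case is a singular aleph fixed point $\alpha=\aleph_\alpha$ that is not a strong limit (say $\cf \alpha=\omega$ and $2^{\aleph_0}>\alpha$): there your product provably fails, the index trick is unavailable, and one would have to collapse cofinally many cardinals below $\alpha$ to uncountable cardinals without ever collapsing $(2^{\aleph_0})^V$ below $\alpha$ --- which no collapse of the form $\Col(\nu,\cdot)$ with $\nu$ uncountable can do, as it absorbs $2^{<\nu}\ge 2^{\aleph_0}$. That case needs a new idea; to be fair, the paper's own one-line proof (``there is a forcing notion making it into $\aleph_\omega$'') glosses over exactly the same point.
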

\begin{proof}
For regular cardinals, there is a forcing notion making them into $\aleph_1$. For singular cardinals, there is a forcing notion making in into $\aleph_\omega$.
\end{proof}
\begin{question}Is there an $S$-evasive ordinal?
\end{question}

On the other hand, it is clear that a proper class of Woodin cardinals entails that every uncountable cardinal is $R$-evasive for $R\in \{S^*, \mathfrak{Sp}^*\}$. In Theorem \ref{thm:erdos}, we provide a similar result, assuming a weaker large cardinal hypothesis.

\begin{lemma}
Let $X$ be a set of ordinals. If $X^{\#}$ does not exists, then the class of $S^{*}_u$-evasive ordinals and $\mathfrak{Sp}^*_u$-evasive ordinals is bounded. 
\end{lemma}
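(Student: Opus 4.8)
The plan is to produce a threshold $\theta$ (depending on $X$) such that every $\alpha \ge \theta$ is neither $S^*_u$-evasive nor $\mathfrak{Sp}^*_u$-evasive, which immediately gives the desired boundedness. For each such $\alpha$ I would exhibit a single forcing $\mathbb{P}_\alpha$ after which $\alpha \ge \omega_1$ and $\alpha \in S^*_u$, witnessed by a theory $T$ with $\alpha \in \mathfrak{Sp}^*_{T,u}$. The construction is the ``moreover'' part of Lemma \ref{lemma:omega1-inaccessible-to-reals} relativised from $L[r]$ to $L[X]$, and the hypothesis that $X^\#$ does not exist enters only through the Jensen covering lemma for $L[X]$, \cite{DoddJensen}.

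Let $\eta = |\mathrm{tc}(X)|$. Using covering I would first fix an uncountable regular cardinal $\lambda > \eta$ such that $2^{<\lambda} = \lambda$, the successor $\lambda^+$ is computed correctly by $L[X]$, and the almost-disjoint coding apparatus at $\lambda$ is available inside $L[X]$; this rests on the fine structure of $L[X]$ below $\lambda$, in particular the reshaping provided by the $\square$-like principles holding there. Covering guarantees that such $\lambda$ exist cofinally (for instance, taking $\lambda$ to be the successor of a singular cardinal above $\eta$, whose successor is then correctly computed and where the $\GCH$ pattern of $L[X]$ is controlled), so I may set $\theta = \lambda$.

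Given $\alpha \ge \theta$, let $\mathbb{P}_\alpha = \Col(\lambda, |\alpha|)$, which is $<\lambda$-closed and hence adds no bounded subsets of $\lambda$, preserving both the $L[X]$-structure below $\lambda$ and the coding apparatus; in the extension $\lambda \le \alpha < \lambda^+$. I fix a relation $B$ on $\lambda$ of order type $\alpha$ and apply the almost-disjoint coding relative to the $L[X]$-family to obtain $s \subseteq \lambda$ coding $B$, \cite{JensenSolovay}. Let $A$ code the pair $(X \cap \alpha, s)$ and let $T$ be the theory asserting $V = L[A]$, that $A$ decodes to such a pair, and that for every ordinal $\beta$ there is $\gamma < \lambda$ with $\otp\{\delta : (\delta, \gamma) \in B\} = \beta$. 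This pins the height of $M = L_\alpha[A]$ at $\alpha$.

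The verification that $\alpha \in S^*_u \cap \mathfrak{Sp}^*_{T,u}$ then runs exactly as in Lemma \ref{lemma:omega1-inaccessible-to-reals}: in any further generic extension, a transitive $N \equiv M$ must contain $X$ and $s$, decode $B$ (decoding via the $L[X]$-family is absolute for transitive models), and therefore reconstruct $\alpha$ as the order type of $B$, forcing $N \cap \Ord = \alpha$ and $N = M$; the definable well-order together with the pinned data make $M$ rigid, excluding nontrivial elementary embeddings. The main obstacle is the two points at which covering is essential: securing the almost-disjoint and reshaping coding at the uncountable regular cardinal $\lambda$ inside $L[X]$ — the original argument only codes into a real at $\omega_1$, and here one must control the $V$-subsets of cardinals below $\lambda$ against those of $L[X]$ — and guaranteeing that decoding stays robust under every generic extension. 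The latter is where $\neg X^\#$ does the real work: it forecloses, above $\theta$, exactly the large-cardinal embeddings (of the kind exploited in Theorem \ref{thm:erdos}) that would otherwise witness non-maximality of $M$ in some generic extension.
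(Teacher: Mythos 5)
Your overall skeleton --- use covering for $L[X]$ to produce a threshold, then show every ordinal above it can be forced into $S^*_u$ and $\mathfrak{Sp}^*_{T,u}$ via the ``moreover'' part of Lemma \ref{lemma:omega1-inaccessible-to-reals} --- is the same as the paper's. But the execution has a fatal gap at the coding step. The language $\{=,\in,A\}$ is countable, so every theory $T$ is a countable set of sentences and can pin at most countably much information about the predicate $A$: it can specify a \emph{real} bit by bit (each natural number is definable, so $T$ may contain ``$\underline{n}\in A$'' for each $n$), but it cannot specify an arbitrary uncountable subset of $\lambda$, nor even name individual ordinals below $\lambda$. Your step ``let $T$ be the theory asserting \dots that $A$ decodes to such a pair $(X\cap\alpha, s)$'' is therefore not expressible: distinct transitive models $L_{\alpha'}[A']$ with different pairs can satisfy exactly the same sentences, and the uniqueness and maximality claims collapse. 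Relatedly, your almost disjoint coding of $B\subseteq\lambda$ by $s\subseteq\lambda$ achieves no compression; the entire purpose of the Jensen--Solovay coding in Lemma \ref{lemma:omega1-inaccessible-to-reals} is to compress data living on $\omega_1$ \emph{down to a real}, precisely because only reals can be pinned by a theory.

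The paper's proof keeps everything at the countable level for exactly this reason: pick a singular cardinal $\nu > \sup X$ in $V$; by Jensen covering for $L[X]$ (this is the sole use of $\neg X^{\#}$), $\mu := \nu^+ = (\nu^+)^{L[X]}$. Forcing with $\Col(\omega,\nu)$ makes $\nu$ (hence $X$) countable, so a single real $r$ codes $X$ and the collapse, giving $\mu = \omega_1^{V[G]} = \omega_1^{L[r]}$; then Lemma \ref{lemma:omega1-inaccessible-to-reals} applies, and for each $\alpha \geq \mu$ a further proper ($\sigma$-closed followed by ccc) forcing collapses $|\alpha|$ to $\aleph_1$ and almost-disjoint-codes a wellorder of type $\alpha$ into a second real $s$, so that the theory pins the pair of reals $(r,s)$. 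Your two other uses of $\neg X^{\#}$ are also off the mark: covering does not provide regular $\lambda$ with $2^{<\lambda}=\lambda$ or reshaping in $V$ (such cardinal arithmetic is not a theorem of $\ZFC$), and $\neg X^{\#}$ plays no role in ``foreclosing'' Erd\H{o}s-type embeddings in generic extensions --- maximality and uniqueness of the witnessing model come entirely from the fact that its defining parameters are reals determined by the theory, not from an anti--large-cardinal hypothesis.
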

\begin{proof}
First, since $X^{\#}$ does not exists, there is a cardinal $\mu$ which is regular in $V$ and successor in $L[X]$. Indeed, pick some large enough singular cardinal in $V$. Then, its successor is computed correctly in $L[X]$. Let $\nu$ be the predecessor of $\mu$ in $L[X]$.

Let $G \subseteq \Col(\omega, \nu)$ be $V$-generic. In $V[G]$, there is a real $r$ such that $\mu = \omega_1^{L[r]}$. By Lemma \ref{lemma:omega1-inaccessible-to-reals}, we conclude that $\mu \in S^*_u$ and in $\mathfrak{Sp}^*_u$ in $V[G]$, and moreover, every ordinal $\alpha \geq \mu$ belongs to those classes in some generic extension by an additional proper forcing, by collapsing it to have cardinality $\aleph_1$ and coding its order type using almost disjoint coding.
\end{proof}

\begin{theorem}
Let us assume that for every $\alpha$ there is an $\alpha$-Erd\H{o}s cardinal. Then, every ordinal is $(\mathfrak{Sp}^*\cup S^*)$-evasive.
\end{theorem}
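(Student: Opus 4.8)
The plan is to reduce the statement, inside each generic extension, to Theorem \ref{thm:erdos}. Fix an ordinal $\alpha$ and a set forcing $\mathbb{P}$, and let $G \subseteq \mathbb{P}$ be generic over $V$. Working in $V[G]$, I may assume $\alpha \geq \omega_1^{V[G]}$, since otherwise the disjunct $\check\alpha < \dot{\omega}_1$ is forced, and I must then show $\alpha \notin (\mathfrak{Sp}^* \cup S^*)^{V[G]}$. Both nonmemberships will follow from a single construction: if there is an $\alpha$-Erd\H{o}s cardinal in $V[G]$, then for every theory $T$ and every transitive $M \models T$ of height $\alpha$ in $V[G]$ (necessarily $M = L_\alpha[A]$), the generic ultrapower argument of Theorem \ref{thm:erdos}, carried out over $V[G]$, yields in a further generic extension a non-trivial elementary embedding $j\restriction M \colon M \to j(M)$ with $j(M)$ transitive of height $j(\alpha) > \alpha$. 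Since $M \models T$ and $M$ is elementarily embedded into $j(M)$, also $j(M) \models T$. The embedding witnesses $\alpha \notin \mathfrak{Sp}^*_T$, while the larger transitive model $j(M)\models T$ witnesses $\alpha \notin S^*$ through $T$; as $T$ was arbitrary, $\alpha \notin (\mathfrak{Sp}^* \cup S^*)^{V[G]}$.

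Thus the theorem reduces to producing an $\alpha$-Erd\H{o}s cardinal in $V[G]$, and this is where the full hypothesis is used. Since there is a $\beta$-Erd\H{o}s cardinal for every $\beta$, I first fix $\beta > \max(\alpha, |\mathbb{P}|)$ and let $\lambda$ be the $\beta$-Erd\H{o}s cardinal of $V$. Then $\lambda$ is inaccessible, $\lambda > |\mathbb{P}|$, and $\lambda \to (\beta)^{<\omega}$, hence $\lambda \to (\alpha)^{<\omega}$ in $V$ because a homogeneous set of order type $\beta$ contains an initial segment of order type $\alpha$. It remains to check that this partition relation is preserved into $V[G]$.

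For the preservation I would show that forcing of size $<\lambda$ preserves $\lambda \to (\alpha)^{<\omega}$. Given a $\mathbb{P}$-name $\dot f$ for a two-coloring of $[\lambda]^{<\omega}$, define in $V$ the reduced coloring $g$ where $g(s)$ records, for each $p \in \mathbb{P}$, whether and how $p$ decides $\dot f(s)$; this is a coloring with at most $2^{|\mathbb{P}|} < \lambda$ colors, the inequality using the inaccessibility of $\lambda$. Invoking the standard fact that $\lambda \to (\alpha)^{<\omega}$ implies $\lambda \to (\alpha)^{<\omega}_\mu$ for every $\mu < \lambda$ when $\lambda$ is inaccessible, I obtain $H \in V$ of order type $\alpha$ homogeneous for $g$. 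Constancy of $g$ on each $[H]^n$ transfers to $f$ once $G$ is fixed: if some $p \in G$ decides $\dot f(s)$ for an $s \in [H]^n$, the same $p$ decides $\dot f(s')$ identically for every $s' \in [H]^n$, so $f$ is constant on $[H]^n$. Hence $\lambda \to (\alpha)^{<\omega}$ holds in $V[G]$, giving an $\alpha$-Erd\H{o}s cardinal there and allowing the application of Theorem \ref{thm:erdos}.

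The step I expect to be the main obstacle is exactly this preservation, and within it the color reduction $\lambda \to (\alpha)^{<\omega} \Rightarrow \lambda \to (\alpha)^{<\omega}_\mu$: one must keep the number of forcing decision patterns below $\lambda$, which is the reason for choosing a large Erd\H{o}s cardinal $\lambda$ above $|\mathbb{P}|$ rather than the least $\alpha$-Erd\H{o}s cardinal, and one must verify that ground-model homogeneity for the reduced coloring genuinely yields homogeneity for $\dot f$ in $V[G]$. Everything else is either immediate or delegated to the construction already established in Theorem \ref{thm:erdos}.
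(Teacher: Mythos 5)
Your overall architecture is the same as the paper's: assume $\alpha$ is uncountable and in $\mathfrak{Sp}^*_T$ (or $S^*$ via $T$) in $V[G]$, argue that $V[G]$ still contains an $\alpha$-Erd\H{o}s cardinal because $V$ had a $\beta$-Erd\H{o}s cardinal for $\beta$ above $\alpha$ and the forcing, and then invoke Theorem \ref{thm:erdos}. The paper treats the preservation step as a black box; you try to prove it, and that is where there is a genuine gap. Your argument reduces the $\mathbb{P}$-name $\dot f$ to the ground-model ``decision pattern'' coloring $g$, which has up to $2^{|\mathbb{P}|}$ colors, and then you invoke ``the standard fact that $\lambda \to (\alpha)^{<\omega}$ implies $\lambda \to (\alpha)^{<\omega}_\mu$ for every $\mu < \lambda$ when $\lambda$ is inaccessible.'' That is not a standard fact, and the usual techniques do not prove it in the regime you need. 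What the standard arguments give is: (i) $\lambda \to (\alpha)^{<\omega}_2$ implies $\lambda \to (\alpha)^{<\omega}_{2^{\aleph_0}}$ (by splitting a coloring into countably many $2$-valued colorings of various arities and amalgamating them); and (ii) homogeneous sets of order type $\beta$ can absorb fewer than $\beta$ colors, via the trichotomy argument: for each $n$, the truth values of ``$f(\text{first }n) = f(\text{last }n)$'' and ``$f(\text{first }n) < f(\text{last }n)$'' are constant on $2n$-tuples from the homogeneous set; the decreasing case contradicts well-foundedness, and the strictly increasing case contradicts the fact that a chain of pairwise separated $n$-tuples of order type $\beta$ cannot map strictly increasingly into $\mu < \beta$. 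Crucially, when the number of colors $\mu$ is at least $\max(\alpha, (2^{\aleph_0})^+)$, the strictly increasing alternative is \emph{not} contradictory for homogeneous sets of type only $\alpha$, so nothing rules it out. In your application $\mu = 2^{|\mathbb{P}|}$, which typically dwarfs both $\alpha$ and $2^{\aleph_0}$ (e.g.\ $\alpha = \omega_1$ and $|\mathbb{P}| = \aleph_5$), so the fact you lean on is exactly the unavailable case.

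The repair is cheap precisely because the hypothesis gives Erd\H{o}s cardinals for \emph{every} ordinal: do not drop from $\beta$ down to $\alpha$ before the preservation argument, and choose $\beta$ above the number of colors rather than above $|\mathbb{P}|$. Concretely, fix a limit ordinal $\beta > \max(\alpha, 2^{|\mathbb{P}|})$ and let $\lambda$ be a $\beta$-Erd\H{o}s cardinal. Then $\lambda \to (\beta)^{<\omega}_\mu$ for every $\mu < \beta$ by the trichotomy argument above (for limit $\beta$ the consecutive $n$-blocks of a type-$\beta$ set again have type $\beta$, so the increasing case is killed by $\mu < \beta$). Applying this to your decision coloring $g$ (at most $2^{|\mathbb{P}|}\cdot 3 < \beta$ colors) yields $H \in V$ of order type $\beta$ that, exactly by your transfer argument, remains homogeneous for $\dot f^G$ in $V[G]$; truncating to an initial segment of type $\alpha$ gives $\lambda \to (\alpha)^{<\omega}_2$ in $V[G]$, and the rest of your proof (the reduction of both $\alpha \notin \mathfrak{Sp}^*$ and $\alpha \notin S^*$ to the generic embedding of Theorem \ref{thm:erdos}, which is correct and in fact more explicit than the paper about covering $S^*$) goes through unchanged. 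Note that the paper's own one-line justification also takes $\beta$ only above $|\mathbb{P}|$ and $\alpha$, so your instinct matched the text; but a complete proof along these lines needs $\beta$ above $2^{|\mathbb{P}|}$, or else a citation for small-forcing preservation of Erd\H{o}s cardinals in place of the color-boosting claim as you stated it.
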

\begin{proof}
Let $\alpha$ be an ordinal and let us assume that there is a generic extension by a forcing notion $\mathbb{P}$ in which $\alpha$ is uncountable and belongs to $\mathfrak{Sp}^*_T$ for some $T$. 

In the generic extension there is an $\alpha$-Erd\H{o}s cardinal, as there was a $\beta$-Erd\H{o}s cardinal in the ground model for $\beta$ larger than the size of the forcing $\mathbb{P}$ and $\alpha$. Now let us apply Theorem \ref{thm:erdos} and derive a contradiction.
\end{proof}
The theorems of this section bound the consistency strength of the assertion "every ordinal is $\mathfrak{Sp}^*$-evasive" between the existence of sharps for every set and the existence of Erd\H{o}s cardinals.
\begin{question}
What is the consistency strength of the assertion "every ordinal $\alpha$ is $\mathfrak{Sp}^*$-evasive?
\end{question}

\begin{thebibliography}{10}

\bibitem{BaldwinShelah2022}
John~T Baldwin and Saharon Shelah, \emph{Hanf numbers for extendibility and
  related phenomena}, Archive for Mathematical Logic (2022), 1--28.

\bibitem{BarwiseKunen}
Jon Barwise and Kenneth Kunen, \emph{Hanf numbers for fragments of {$L_{\infty
  \omega }$}}, Israel J. Math. \textbf{10} (1971), 306--320. \MR{316210}

\bibitem{CummingsHandbook}
James Cummings, \emph{Iterated forcing and elementary embeddings}, Handbook of
  set theory. {V}ols. 1, 2, 3, Springer, Dordrecht, 2010, pp.~775--883.
  \MR{2768691}

\bibitem{DoddJensen}
Tony Dodd and Ronald Jensen, \emph{The covering lemma for {$K$}}, Ann. Math.
  Logic \textbf{22} (1982), no.~1, 1--30. \MR{661475}

\bibitem{ForemanHandbook}
Matthew Foreman, \emph{Ideals and generic elementary embeddings}, Handbook of
  set theory. {V}ols. 1, 2, 3, Springer, Dordrecht, 2010, pp.~885--1147.
  \MR{2768692}

\bibitem{Friedman75}
Harvey Friedman, \emph{Large models of countable height}, Trans. Amer. Math.
  Soc. \textbf{201} (1975), 227--239. \MR{416903}

\bibitem{GitmanHamkinsKaragila}
Victoria Gitman, Joel~David Hamkins, and Asaf Karagila, \emph{Kelley-{M}orse
  set theory does not prove the class {F}odor principle}, Fund. Math.
  \textbf{254} (2021), no.~2, 133--154. \MR{4241500}

\bibitem{GitmanSchindler2018}
Victoria Gitman and Ralf Schindler, \emph{Virtual large cardinals}, Ann. Pure
  Appl. Logic \textbf{169} (2018), no.~12, 1317--1334. \MR{3860539}

\bibitem{JensenSolovay}
R.~B. Jensen and R.~M. Solovay, \emph{Some applications of almost disjoint
  sets}, Mathematical {L}ogic and {F}oundations of {S}et {T}heory ({P}roc.
  {I}nternat. {C}olloq., {J}erusalem, 1968), North-Holland, Amsterdam, 1970,
  pp.~84--104. \MR{0289291}

\bibitem{Morley}
Michael Morley, \emph{Omitting classes of elements}, Theory of {M}odels
  ({P}roc. 1963 {I}nternat. {S}ympos. {B}erkeley), North-Holland, Amsterdam,
  1965, pp.~265--273. \MR{0201305}

\bibitem{Schindler2000}
Ralf-Dieter Schindler, \emph{Proper forcing and remarkable cardinals}, Bull.
  Symbolic Logic \textbf{6} (2000), no.~2, 176--184. \MR{1765054}

\bibitem{SinapovaSouldatos2020}
Dima Sinapova and Ioannis Souldatos, \emph{Kurepa trees and spectra of
  $\mathcal{L}_{\omega_1,\omega}$-sentences}, Archive for Mathematical Logic
  \textbf{59} (2020), no.~7-8, 939--956.

\end{thebibliography}

\providecommand{\bysame}{\leavevmode\hbox to3em{\hrulefill}\thinspace}
\providecommand{\MR}{\relax\ifhmode\unskip\space\fi MR }
\providecommand{\MRhref}[2]{%
  \href{http://www.ams.org/mathscinet-getitem?mr=#1}{#2}
}
\providecommand{\href}[2]{#2}

\end{document}